\newtheorem{lemma}{Lemma}
\newtheorem{definition}[lemma]{Definition}
\newtheorem{proposition}[lemma]{Proposition}
\newtheorem{theorem}[lemma]{Theorem}
\newtheorem{remark}[lemma]{Remark}
\newtheorem{example}[lemma]{Example}
\begin{document}
\title{Periodic String Complexes over String Algebras}

 \author{Andr\'es Franco*, Hern\'an Giraldo*, and Pedro Rizzo*.}

\date{\today}

\begin{abstract}
In this paper we develop combinatorial techniques for the case of string algebras with the aim to give a characterization of string complexes with infinite minimal projective resolution. These complexes will be called \textit{periodic string complexes}. As a consequence of this characterization, we give two important applications. The first one, is a sufficient condition for a string algebra to have infinite global dimension. In the second one, we exhibit a class of indecomposable objects in the derived category for a special case of string algebras. Every construction, concept and consequence in this paper is followed by some illustrative examples.
\end{abstract}
\maketitle

Keywords: String algebras, String Complexes, Derived Categories, Projective resolutions, Infinite global dimension.

\vskip5pt
\noindent
 
\vskip5pt
\noindent
 * Instituto de Matem\'aticas, Universidad de Antioquia, Medell\'in-Colombia

 e-mail: andres.francol@udea.edu.co, \ hernan.giraldo@udea.edu.co, \ pedro.hernandez@udea.edu.co.

\section{Introduction}
Projective resolutions have played an important role in the development of algebra with many applications in representation theory, ring theory, homological theory and algebraic geometry (\cite{GSZ} and references therein). Many theoretical, combinatorial and computational techniques  have been introduced in order to obtain these resolutions (\cite{FGKK}, \cite{GHZ}). In this paper we develop combinatorial techniques for the case of string algebras with the aim to give a characterization of string complexes with infinite minimal projective resolution. These complexes will be called \textit{periodic string complexes}.

As a consequence of this characterization, we give two important applications. The first one, is a sufficient condition for a string algebra to have infinite global dimension. In the second one, we use the functor in \cite{Be-Me} to prove that string complexes are indecomposable objects in the derived category of a special case of string algebras. In this special case we say that the string algebra satisfies the \textit{unique maximal path property}. Every construction, concept and consequence in this paper is followed by some illustrative examples.

Actually, this work has deep connections with the ideas and consequences in \cite{FGR}. In fact, some results of this paper were announced without proof in \cite{FGR}. However, we consider these results could have independent interest, hence detailed proofs are presented here.

In general, it is not easy to describe the indecomposable objects in the derived category of an algebra. In \cite{Be-Me}, the authors solved this problem when the algebra is Gentle. A recent work in \cite{FGR}, using an adaptation of the constructions in \cite{Be-Me}, the authors prove that string and band complexes are indecomposable objects in the bounded derived category of a new class of algebras: the String Almost Gentle algebras.

On the other hand, P. Bergh, Y. Han and D. Madsen in \cite{Be-Han-Ma} (see also \cite{Ha-Za}) gave a sufficient condition for a monomial algebra to have infinite global dimension. In this paper we will give an analogous sufficient condition for the case of a string algebra using different combinatorial techniques. 

This paper is organized as follows. In section \ref{Sec-Pre}, we fix some notations and present some background material about well-known results in derived categories, string algebras and string complexes. 

Then, in section \ref{Sec:PSC} we state and prove the main Theorem. We introduce a class of complexes characterized by having infinite minimal projective resolution. These complexes will be called \textit{periodic string complexes}. The main technique is the generalization of the combinatorial sets $Q_c$, $GSt_c$, $GSt^c$ introduced in  \cite{Be-Me} and the proof follows some ideas in \cite{FGR}.

Finally, in section \ref{Sec:App} we introduce and prove two important applications of our main result, as we mentioned above.

\section{Preliminaries}\label{Sec-Pre}

\subsection{About derived categories of modules over finite dimensional algebras}
Let $A$ be a finite-dimensional algebra of the form $kQ/I=k(Q, I)$ over an algebraically closed field $k$, where $I$ is an admissible ideal of $kQ$ and $Q$ is a finite quiver. As usual, we denote by $Q_0$ (resp. by $Q_1$) the set of vertices (resp. the set of arrows) of $Q$. Let $A-\text{mod}$ be the category of finitely generated left $A$--modules.

We denote by $e_i$ the trivial path at vertex $i\in Q_0$ and by $P_i=Ae_i$ the corresponding indecomposable projective $A$--module.  We will denote by $\textbf{Pa}$ the set of all paths of $k(Q,I)$, that is, the set of all paths of $Q$ that are outside $I$ and by $\textbf{Pa}_{\geq l}$ (resp. by $\textbf{Pa}_{> l}$) the set of all paths in $\textbf{Pa}$ of length greater than or equal to a fixed non-negative integer $l$ (resp. greater than $l$). Every element of $A=k(Q,I)$ can be represented uniquely by a linear combination of elements in $\textbf{Pa}$, and hence we can assume that $\textbf{Pa}$ is a basis for $A$.

The set $\textbf{M}$ of \textit{maximal paths} in $\textbf{Pa}$ has a very important role throughout this theory. A path $w=w_1\cdots w_n$ in  $k(Q,I)$ is \textit{maximal} in $k(Q,I)$ if for all arrows $a,b \in Q_1$, we have that  $aw$ and $wb$ are not paths in $k(Q,I)$. Furthermore, a nontrivial path $w$ of $Q$ is in $\textbf{Pa}$ if and only if it is a sub-path of a maximal path  $\widetilde{w}$ that is not in $I$ (that is, of an element of $\textbf{M}$). This maximal path has the form $\widetilde{w}=\hat{w}w\bar{w}$ with $\hat{w}, \bar{w}\in \textbf{Pa}$. For future reference, the path $\hat{w}\in\textbf{Pa}$ is called a \emph{left completion of $w$}.

We denote by $D(A)$ (resp. $D^-(A)$ or $D^b(A)$) the derived category of $A-\text{mod}$ (resp. the derived category of right bounded complexes of $A-\text{mod}$ or the derived category of bounded complexes of $A-\text{mod}$). We denote by $C^b(\text{pro} \ A)$ (resp. $C^-(\text{pro} \ A)$ or $C^{-,b}(\text{pro} \ A)$) the category of bounded projective complexes (resp. of right bounded projective complexes or right bounded projective complexes with bounded cohomology).

Also, we denote by $K^b(\text{pro} \ A)$ (resp. $K^-(\text{pro} \ A)$ or $K^{-,b}(\text{pro} \ A)$) the corresponding homotopy categories to $C^b(\text{pro} \ A)$ (resp. $C^-(\text{pro} \ A)$ or $C^{-,b}(\text{pro} \ A)$).

By $\mathfrak{p}(A)$ we denote the full subcategory of $C^b(\text{pro} \ A)$ defined by the projective complexes such that the image of every differential map is contained in the radical of the corresponding projective module. It is well known that $\mathfrak{p}(A)$ is a Krull-Schmidt category.

Any projective complex is the sum of two complexes: one complex in $\mathfrak{p}(A)$ and another one isomorphic to the zero object in the derived category (because all differential maps are 0's or isomorphisms). Hence, we can assume that all the complexes we deal with are in $\mathfrak{p}(A)$.

Now, since every projective module is a finite direct sum of indecomposable projective modules, each morphism (differential) in the complex $P^{\bullet}$ is given by a block matrix of size $\sum_id_{i,j}\times\sum_id_{i,j+1}$ if the differential goes from the place $j$ to the place $j+1$. Here $d_{i,j}$ represents how many times the indecomposable projective module $P_i$ appears as a direct summand in each $P^j$ in the complex $P^{\bullet}$ and we will use the notation $P_i^{d_{i,j}}$. Thus, each block gives the component of the morphism corresponding to each pair of indecomposables. That is, each block corresponds to a morphism $P_r^{d_{r,j}}\longrightarrow P_s^{d_{s,j+1}}$.

It is well known that the paths $w\in\textbf{Pa}$ such that $s(w)=r$ and $t(w)=s$ form a basis for $\text{Hom}(P_r,P_s)$. In the particular case of the category $\mathfrak{p}(A)$ we can assume that only the paths $w\in\textbf{Pa}_{\geq 1}$ are involved, because trivial paths give isomorphisms.

If $w\in\textbf{Pa}_{\geq 1}$ is one of these paths, it defines the morphism $p(w): P_r\longrightarrow P_s$, given by the multiplication times $w$ on the right: $u\mapsto v=uw$. Thus, any homomorphism from $P_r$ to $P_s$ is associated to a linear combination of paths like  $w$.

An important and useful result, which we will use later, is that $D^b(A)$ is equivalent to $K^{-,b}(\text{pro} \ A)$ (see, \cite{KoZi}, Prop 6.3.1, p.113). In fact, this result allows us to classify the indecomposable objects in $D^b(A)$. Precisely,

\begin{proposition}\label{prop:1}
There exist spectroids $\text{ind} \ D^b(A)$ and $\text{ind} \ \mathfrak{p}(A)$  of $D^b(A)$ and $\mathfrak{p}(A)$, respectively, such that the set of objects of the spectroid satisfies
$$
\text{ind}_0 \ D^b(A)= \text{ind}_0 \ \mathfrak{p}(A) \cup \{\beta(M^\bullet)^\bullet \ | \ M^\bullet \in \mathcal{X}(A) \}.
$$
\end{proposition}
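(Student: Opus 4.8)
The plan is to transport the problem into the homotopy category $K^{-,b}(\text{pro}\ A)$, where minimal projective complexes furnish canonical representatives, and then to separate these representatives into the bounded ones and the genuinely unbounded ones. I would begin from the equivalence $D^b(A)\simeq K^{-,b}(\text{pro}\ A)$ recalled above. Since $A$ is finite dimensional, $D^b(A)$ has finite dimensional Hom-spaces and split idempotents, hence is Krull--Schmidt; transporting along the equivalence, so is $K^{-,b}(\text{pro}\ A)$, and by hypothesis so is $\mathfrak{p}(A)$. Each of these categories therefore admits a spectroid, namely a full subcategory whose objects form a complete irredundant system of representatives of the isomorphism classes of indecomposables, each with local endomorphism ring. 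Producing $\text{ind}\ D^b(A)$ and $\text{ind}\ \mathfrak{p}(A)$ is precisely this choice of representatives; the real content is that the former may be chosen so that its object set is that of the latter together with the extra family.

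Next I would fix a normal form. Using the fact recalled in the preliminaries that every projective complex splits as a minimal complex, whose differentials have image in the radical, plus a complex with zero or invertible differentials that is null-homotopic, every object of $K^{-,b}(\text{pro}\ A)$ is isomorphic in the homotopy category to a minimal one. The essential rigidity is that two minimal complexes isomorphic in $K^{-,b}(\text{pro}\ A)$ are already isomorphic as complexes: the radical condition forces the correction terms arising from a null-homotopy into the radical of the endomorphism ring, so a homotopy equivalence between minimal complexes is an honest isomorphism. Hence the spectroid $\text{ind}\ D^b(A)$ can be taken to consist of indecomposable minimal complexes.

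The heart of the argument is a dichotomy for an indecomposable minimal complex $P^\bullet$: being right bounded with bounded cohomology, it is either also left bounded, hence bounded, or it has an infinite tail. If $P^\bullet$ is bounded, it is an object of $\mathfrak{p}(A)$, and because $K^b(\text{pro}\ A)\to D^b(A)$ is fully faithful while minimal complexes are rigid, $P^\bullet$ represents an element of $\text{ind}_0\ \mathfrak{p}(A)$. If $P^\bullet$ is unbounded, minimality together with bounded cohomology forces its tail to agree, far enough to the right, with a minimal projective resolution, so that $P^\bullet$ is recovered from finite data by applying $\beta$ to an object $M^\bullet$ of $\mathcal{X}(A)$, that is $P^\bullet\cong\beta(M^\bullet)^\bullet$. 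Taking $\text{ind}\ D^b(A)$ to be $\text{ind}_0\ \mathfrak{p}(A)$ together with the objects $\beta(M^\bullet)^\bullet$ for $M^\bullet\in\mathcal{X}(A)$ then yields the stated equality, once one checks that the two families are disjoint, since a bounded complex is never isomorphic to a genuinely unbounded minimal complex, and that each $\beta(M^\bullet)^\bullet$ is indecomposable with local endomorphism ring.

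I expect the main obstacle to be the unbounded case: showing precisely that the infinite tail of an indecomposable minimal complex of bounded cohomology is determined by finite data, so that $\beta$ and $\mathcal{X}(A)$ provide an irredundant parametrization of the non-perfect indecomposables. This is where the eventual stabilization of minimal projective resolutions must be combined with the uniqueness of the minimal model; by comparison, the bounded case and the Krull--Schmidt bookkeeping are routine.
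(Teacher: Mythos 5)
You should know at the outset that the paper itself offers no proof of Proposition \ref{prop:1}: it is recalled as background material, with the equivalence $D^b(A)\simeq K^{-,b}(\text{pro}\ A)$ quoted from \cite{KoZi} and the classification itself taken from \cite{Be-Me}. So the only possible comparison is with the standard argument in that literature, and your outline does follow its skeleton: pass to $K^{-,b}(\text{pro}\ A)$, normalize objects by minimal (radical) complexes, use the rigidity of minimal complexes, and separate the bounded case, which yields $\text{ind}_0\ \mathfrak{p}(A)$, from the unbounded case, which is meant to yield the objects $\beta(M^\bullet)^\bullet$ with $M^\bullet\in\mathcal{X}(A)$.

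As a proof, however, the proposal has a genuine gap, located exactly at the two steps you flag but never carry out; they are the entire content of the statement beyond Krull--Schmidt bookkeeping. First, in the unbounded case you must exhibit $M^\bullet\in\mathcal{X}(A)$, and by the paper's definition this forces $M^\bullet\in\text{ind}_0\ \mathfrak{p}(A)$. The natural candidate (implicit in your ``recovered from finite data'') is the brutal truncation of the minimal complex $P^\bullet$ at a degree $r$ below its cohomology, i.e.\ keep $P^i$ for $i\geq r$ and replace everything else by $0$. But this truncation need not be indecomposable even though $P^\bullet$ is: any direct summand $Q$ of $P^r$ with $\partial^r(Q)=0$ splits off the truncation as a complex concentrated in degree $r$, although it is not a summand of $P^\bullet$ itself, because $\partial^{r-1}$ may map into it. The repair is to decompose the truncation in $\mathfrak{p}(A)$, check that $\beta$ (computed at the fixed level $r$) respects this decomposition, observe that summands concentrated in degree $r$ have $\beta\cong 0$ in $D^b(A)$ while summands starting above $r$ stay perfect, and then use indecomposability of the original object to isolate the single surviving summand; none of this is in your text. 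Second, the stated equality also needs every $\beta(M^\bullet)^\bullet$ with $M^\bullet\in\mathcal{X}(A)$ to be indecomposable in $D^b(A)$, and this is not formal: one needs that $M^\bullet$ is recovered as the brutal truncation of the minimal projective resolution of $\beta(M^\bullet)^\bullet$, which requires $\ker\partial^r_{M^\bullet}\subseteq \text{rad}\, M^r$, and that containment is precisely where indecomposability of $M^\bullet$ enters (a nonzero summand of $M^r$ contained in $\ker\partial^r_{M^\bullet}$ would split off $M^\bullet$ itself). You defer both points (``the main obstacle'', ``once one checks''), so what you have is a correct strategy, not a proof. A last, smaller slip: the objects in the second family are the truncations $\beta(M^\bullet)^\bullet$ themselves, which are bounded complexes of generally non-projective modules; your ``$P^\bullet\cong\beta(M^\bullet)^\bullet$'' can only mean isomorphism in $D^b(A)$, i.e.\ a quasi-isomorphism, not an isomorphism of complexes.
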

Here, $\mathcal{X}(A)$ is a fixed set of representatives of the quotient of the set 
$$
\overline{\mathcal{X}(A)}:=\{M^\bullet\in \text{ind}_0 \ \mathfrak{p}(A) \ | \ P_{\beta(M^\bullet)^\bullet}^\bullet \notin K^b(\text{pro} \ A) \}
$$ 
by the equivalence relation $\cong_{\mathcal{X}}$. This equivalence relation is defined on $\overline{\mathcal{X}(A)}$ in $K^{-,b}(\text{pro} \ A)$, by
$$
M^\bullet\cong_{\mathcal{X}}N^\bullet \quad \text{if and only if} \quad 
P_{\beta(M^\bullet)^\bullet}^\bullet\cong P_{\beta(N^\bullet)^\bullet}^\bullet
$$ 
where $P_{\beta(M^\bullet)^\bullet}^\bullet$ is the projective resolution of the good truncation $\beta(M^\bullet)^\bullet$ of the complex $M^{\bullet}$. Precisely, if $P^{\bullet}\in C^{b}(\text{pro} \ A)$, $P^{\bullet}\neq 0^{\bullet}$ and if $t$ is the greatest integer such that $P^i=0$ for $i<t$, then $\beta(P^{\bullet})^{\bullet}$, \emph{the good truncation of $P^{\bullet}$ below $t$}, is the complex given by
$$
\beta(P^{\bullet})^i=\left\{ \begin{array}{lcc}
                    P^i,       &   \text{if}  & i\geq t, \\
\ker \partial_{P^{\bullet}}^t, & \text{if}    & i=t-1,\\
                    0,         &              & \text{otherwise.}  
\end{array}
\right.
$$
$$
\partial_{\beta(P^{\bullet})^{\bullet}}^i=\left\{ \begin{array}{lcc}
      \partial_{P^{\bullet}}^i,       &   \text{if}  & i\geq t, \\
\iota_{\ker \partial_{P^{\bullet}}^t} & \text{if}    & i=t-1 \\
                           0,         &              & \text{otherwise,}  
\end{array}
\right.
$$
where $\iota_{\ker \partial_{P^{\bullet}}^t}$ is the inclusion map. More details in \cite{KoZi}, \cite{Be-Me}.

\begin{remark}\label{Rem:remark1}
If $A$ has finite global dimension, then $\mathcal{X}(A)=\emptyset$ and $\text{ind}_0 \ D^b(A)= \text{ind}_0 \ \mathfrak{p}(A)$.
\end{remark}

\subsection{About string algebras}
For the benefit of the reader we begin this section by recalling the definition of a special biserial algebra, and, in particular, of a string algebra.

\begin{definition}
 The algebra $A=kQ/I$ is called \emph{special biserial} if it satisfies the following conditions:
	\begin{enumerate}
		\item Any vertex of $Q$ is the starting point of at most two arrows. Any vertex of $Q$ is the ending point of at most two arrows.
		\item Given an arrow $a\in Q_1$, there is at most one arrow $b\in Q_1$ with $s(b)=t(a)$ and $ab\notin I$.
		\item Given an arrow $a\in Q_1$, there is at most one arrow $c\in Q_1$ with $t(c)=s(a)$ and $ca\notin I$.
	\end{enumerate}
\end{definition}

It is a well known fact that, for special biserial algebras, $I$  can be generated by zero relations and by commutativity relations. More details in \cite{SW}.

\begin{definition}
 A special biserial algebra $A=kQ/I$ is called a \emph{string algebra} if, additionally, $I$ is generated by zero relations, i.e., by paths of length greater than or equal to 2.
\end{definition}

The first preliminary result of our interest on string algebras, establishes the uniqueness of the left completion of an arrow $a$, denoted by $\hat{a}$.

\begin{lemma}\label{lem-left-completion}
 Let $A=kQ/I$ be a string algebra and let $a\in Q_1$. Then $\hat{a}$ is unique.
\end{lemma}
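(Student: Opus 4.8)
The plan is to show that $\hat a$ is produced from $a$ by a purely deterministic backward process, so that no genuine choice ever occurs. Two facts will be used throughout. First, since the maximal path decomposes as $\widetilde a=\hat a\,a\,\bar a$, the path $\hat a\,a$ is a subpath of $\widetilde a\in\textbf{M}$ and hence lies in $\textbf{Pa}$. Second, because $I$ is generated by zero relations (paths of length $\geq 2$), any subpath of an element of $\textbf{Pa}$ is again in $\textbf{Pa}$: a subpath lying in $I$ would contain a generator of $I$, forcing the larger path into $I$ as well.

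First I would identify the arrow $c_k$ of $\hat a$ adjacent to $a$. It satisfies $t(c_k)=s(a)$ and $c_k\,a\in\textbf{Pa}$, so $c_k\,a\notin I$; by condition (3) in the definition of a special biserial algebra there is at most one arrow with this property, whence $c_k$ is determined by $a$. I would then proceed inductively: at a stage where the path $p=c_j\cdots c_k\,a\in\textbf{Pa}$ has already been shown to be unique, its leftmost arrow $c_j$ determines, through condition (3) applied to $c_j$, at most one arrow $c_{j-1}$ with $t(c_{j-1})=s(c_j)$ and $c_{j-1}c_j\notin I$. Thus there is at most one candidate for the arrow to be prepended. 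The process terminates after finitely many steps, since $A$ is finite dimensional and so paths have bounded length, and the resulting path is $\hat a$.

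The step I expect to be the main obstacle is the interference of longer zero relations: condition (3) controls only adjacent pairs of arrows, so the unique candidate $c_{j-1}$ above could in principle satisfy $c_{j-1}c_j\notin I$ and yet give $c_{j-1}p\in I$ because of a relation spanning several arrows. The observation that removes this difficulty is that whenever this happens, no arrow whatsoever can be prepended to $p$: any arrow $c'\neq c_{j-1}$ with $t(c')=s(c_j)$ already satisfies $c'c_j\in I$ by the uniqueness in condition (3), and then $c'p\in I$ as well, since $c'c_j$ is a subpath of $c'p$. Hence at every stage either the unique candidate can be prepended within $\textbf{Pa}$ or no arrow can, and in both cases the outcome is forced. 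Therefore the backward process is deterministic, $\hat a$ is uniquely determined by $a$, and the lemma follows.
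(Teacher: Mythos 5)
Your argument is correct in substance and runs on the same engine as the paper's proof, namely condition (3) in the definition of a special biserial algebra, but it is a genuine refinement rather than a restatement. The paper's proof is a single-step contradiction: two distinct left completions would produce two distinct arrows $c\neq c'$ ending at $s(a)$ with $ca,\, c'a\notin I$. That only treats the case in which the two completions already differ in the arrow adjacent to $a$; it says nothing when they share a common last arrow and diverge further back, nor about zero relations of length greater than $2$. Your backward induction covers the first situation, and your observation that a failed prepending of the unique candidate $c_{j-1}$ blocks \emph{every} arrow (any other $c'$ has $c'c_j\in I$, hence $c'p\in I$ since $I$ is monomial) covers the second; this subtlety is invisible for gentle and SAG algebras, where the relevant generators are arrows, but is real for general string algebras. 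What you actually establish is a chain property: every path of $\textbf{Pa}$ ending with $a$ is a suffix of one deterministically constructed path, so the longest such path $\hat{a}a$ is well defined. That is precisely the form in which the lemma is used later, e.g.\ in Lemma \ref{Lem-St} (where $a^*$ is located inside $\hat{a}a$) and in Lemma \ref{Lemma7-n=1-converse}.

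One caveat, which your proof shares with the paper's: the closing identification ``the resulting path is $\hat{a}$'' is valid only if $\hat{a}$ is read as the \emph{longest} path that can be prepended to $a$ inside $\textbf{Pa}$. Under the literal definition, where $\hat{a}$ is whatever stands to the left of $a$ in some decomposition $\widetilde{a}=\hat{a}a\bar{a}$ of some maximal path through $a$, the backward process need not terminate exactly at $\hat{a}$: maximality of $\hat{a}a\bar{a}$ forbids $x\hat{a}a\bar{a}\notin I$ for arrows $x$, not $x\hat{a}a\notin I$, and a long relation may be completed only by arrows of $\bar{a}$. The paper's own Example \ref{Ex:Str:1}(\ref{Ex1:1}) realizes this: the arrow $b$ lies in the two maximal paths $ab$ and $bc$, yielding the two distinct left completions $a$ and $e_2$. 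So literal uniqueness can fail, and what both arguments (yours in full detail, the paper's in sketch) deliver is uniqueness of the maximal left completion, which is the statement the rest of the paper actually needs; your write-up becomes airtight once that reading is made explicit.
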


\begin{proof}
	Suppose there are two different left completions $\hat{a}$ and $\hat{b}$ of $a$. 
	
	$$
	\xymatrix@R=5mm{\ar@{~>}[rrd]^{\hat{a}}&&&\\
	                           &&\ar[r]^a&\\
                    \ar@{~>}[rru]_{\hat{b}}&&           &}
	$$
	Since $\hat{a}a$ and $\hat{b}a$ are elements in $\textbf{Pa}_{>0}$, then we have two compositions that not lie in $I$, which is a contradiction with the definition of a string algebra.
\end{proof}

Our next goal is to study the structure of the kernel of a morphism $p(w)$ for some $w\in\textbf{Pa}_{>0}$. From the definition of a string algebra, we know that there are at most two arrows $a$ and $b$ such that $t(a)=t(b)=s(w)$. We know also that it cannot happen that $aw\neq 0$ and $bw\neq 0$. Thus, at least one of $aw$ or $bw$ must be zero. Suppose, without loss of generality, that $bw=0$. For string algebras it can occur that $aw\neq 0$ but there could be a path $p$ of the form $p=p_1\cdots p_r a$, where $p_1,\dots, p_r\in Q_1$, such that $pw=0$.  Let us denote by $a^*$ the smallest path of this form (in case it exists). Then, $a^*$ is the smallest subpath of $\hat{a}a$ with the property $a^*w=0$. 

According to the above notation, we state the next Lemma, which will allow us to study the structure of $\ker p(w)$ for some $w\in\textbf{Pa}_{>0}$, and, consequently, the structure of the minimal projective resolutions of complexes. This result is a generalization of Lemma 5 in \cite{Be-Me} (also of Lemma 28 in \cite{FGR}).

\begin{lemma}[Lemma 43 \cite{FGR}]\label{Lem-St}
 Let $A=kQ/I$ be a string algebra and let $w\in\emph{\textbf{Pa}}_{>0}$. Then the general structure of the kernel of $p(w)$ is $\ker p(w)=Aa^*\oplus Ab$.
\end{lemma}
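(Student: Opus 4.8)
The plan is to compute $\ker p(w)$ directly on the monomial basis of $P_r=Ae_r$ and then to match the resulting subspace with the left ideal $Aa^*\oplus Ab$. First I would reduce everything to a count of paths. Since $A$ is a string algebra, the product of two basis paths is again a basis path or is zero, and right multiplication by $w$ is cancellative: if $p,p'\in\textbf{Pa}$ satisfy $pw=p'w\neq 0$, then $p=p'$ (strip off the common right factor $w$). Writing a general element of $P_r$ as $\sum_p c_p\,p$ with $p$ ranging over the basis paths of $P_r$ (those whose last arrow terminates at $s(w)$), the paths with $pw\neq0$ are sent to pairwise distinct basis elements of $P_s$ and therefore cannot cancel one another. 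Consequently $\ker p(w)$ is spanned precisely by the basis paths $p$ with $pw=0$, and the whole statement becomes a combinatorial description of this set of paths.

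Next I would dispatch the two easy facts. Both $b$ and $a^*$ terminate at $s(w)=r$, hence lie in $P_r$; by construction $bw=0$ and $a^*w=0$, and since $(cb)w=c(bw)=0$ and $(ca^*)w=c(a^*w)=0$ for every $c\in A$, we get $Ab\subseteq\ker p(w)$ and $Aa^*\subseteq\ker p(w)$, so $Aa^*+Ab\subseteq\ker p(w)$. For directness, observe that every basis path of $Ab$ has $b$ as its last arrow, whereas every basis path of $Aa^*$ has $a$ as its last arrow (by construction $a^*$ ends with $a$). As $a\neq b$, these two families of basis paths are disjoint, so $Aa^*\cap Ab=0$ and the sum is direct.

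The heart of the argument, and the step I expect to be the main obstacle, is the reverse inclusion $\ker p(w)\subseteq Aa^*+Ab$. Take a basis path $p$ with $pw=0$; it is nontrivial, since $e_rw=w\neq0$ shows $e_r\notin\ker p(w)$, and its last arrow terminates at $s(w)$, so it is either $a$ or $b$, these being the only arrows into $s(w)$. If the last arrow is $b$, then $p=p'b\in Ab$ and we are done. If the last arrow is $a$, the delicate point is to show that $a^*$ is a suffix of $p$, i.e.\ that $p\in Aa^*$. This is exactly where the string-algebra hypothesis is indispensable: by the uniqueness of the left completion (Lemma~\ref{lem-left-completion}, applied step by step along the path rather than to a single arrow), the left extension of $a$ inside $\textbf{Pa}$ is deterministic, so every path in $\textbf{Pa}$ ending with $a$ is a suffix of the single maximal path $\hat a a$. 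Hence all such paths are totally ordered by the suffix relation. Since $p$ and $a^*$ both end with $a$ and both annihilate $w$, and $a^*$ is by definition the shortest path ending with $a$ having this property, $a^*$ must be a suffix of $p$; writing $p=q\,a^*$ gives $p\in Aa^*$.

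Finally I would note the degenerate configurations absorbed by the phrase \emph{general structure}: if already $aw=0$ then $a^*=a$; if no left extension of $a$ ever annihilates $w$ then $a^*$ does not exist and that summand is omitted; and if $s(w)$ receives only one arrow, then the $Ab$ summand is absent. In each case the same dichotomy on the last arrow of a kernel path yields $\ker p(w)=Aa^*\oplus Ab$. The only genuinely subtle point is the suffix-comparability used in the third paragraph: without uniqueness of the left completion there could be several incomparable minimal paths annihilating $w$, and the kernel would then fail to be generated by the single element $a^*$.
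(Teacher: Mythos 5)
Your proposal is correct and follows essentially the same route as the paper's proof: both inclusions rest on the dichotomy over the last arrow of a kernel path, with the case of last arrow $a$ settled by the minimality of $a^*$ together with the uniqueness of the left completion $\hat{a}$ (Lemma~\ref{lem-left-completion}), which makes all paths ending in $a$ comparable under the suffix relation. The only difference is that you make explicit the reduction from arbitrary elements of $\ker p(w)$ to basis paths (via injectivity of right multiplication by $w$ on the basis paths it does not annihilate), a step the paper leaves implicit when it treats a kernel element as a single path.
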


\begin{proof}[Proof]
Here we consider the general case in which $a^*w=0$ and $bw=0$ (we do not exclude the possibility that $a^*=a$). It is clear that $Aa^*\oplus Ab \subseteq \ker p(w)$. For the other inclusion, let $u$ be an element of $\ker p(w)$. Then, since $A$ is a string algebra $u$ must have either $a$ or $b$ as its last arrow. In the latter case $u\in Ab$. In the former case, by the minimality of $a^*$ we have that $a^*$ is a subpath of $u$ and hence $u\in Aa^*$ (here we are using the fact that both $u$ and $a^*$ are subpaths of $\hat{a}a$, which is unique as a consequence of Lemma \ref{lem-left-completion}).
\end{proof}

\begin{remark}
Two important examples of string algebras are gentle algebras \cite{Be-Me} and string almost gentle (briefly, SAG) algebras \cite{FGR}. Is it important to notice that there exist fundamental differences with these latter cases. The first one, is related with the smallest paths in Lemma \ref{Lem-St}, which are arrows themselves. Precisely, for Gentle algebras, if there are two arrows $a$ and $b$ ending at the vertex $s(w)$, then, necessarily, $bw=0$, $aw\neq 0$ and, so $b^*=b$. For SAG algebras it could happen that $aw=0$ and $bw=0$, and hence $a^*=a$ and $b^*=b$. The second one, is that for string algebras it could happen that different maximal paths have common arrows. However, as the following example (\ref{Ex1:2}) shows, there are string algebras, which are neither gentle nor SAG algebras, that satisfy the unique maximal path property. Recall that this property is automatically satisfied by all gentle and SAG algebras (see \cite{Be-Me} and \cite{FGR}, respectively)
\begin{example}\label{Ex:Str:1}
\begin{enumerate}
\item\label{Ex1:1} Consider the bound quiver $(Q, I)$ where 
$$
Q: \xymatrix{1\ar[r]^a&2\ar[r]^b&3\ar[r]^c&4}
$$ 
and $I=\langle abc\rangle$. Then $A=kQ/I$ is a string algebra which is neither, a gentle algebra nor a SAG algebra. In this case the set of maximal paths is $\emph{\textbf{M}}=\{ab, bc\}$. Thus, the arrow $b$ belongs to two different maximal paths, and so maximal paths are not unique for a given arrow.
\item\label{Ex1:2} Consider the bound quiver $(Q, I)$
$$ 
\xymatrix@C=4.5mm{ & 3\ar[dl]_c&                &&&6\ar[ld]_{c'}\\
		                              1\ar[rr]_a&& 2\ar[rr]_d\ar[ul]_b&&4\ar[rr]_{a'}&&5\ar[ul]_{b'}}
$$
where $I=\langle abc, ad, da', a'b'c'\rangle$. Then $A=kQ/I$ is a string algebra over $k$ that is neither, gentle nor SAG. The set of maximal paths is $\emph{\textbf{M}}=\{bcab, d, b'c'a'b'\}$. Thus, every arrow belongs to a unique maximal path.
\end{enumerate}
\end{example}
\end{remark}

\subsection{String complexes}
 The {\it string complexes} were introduced by Bekkert and Merklen in \cite{Be-Me}, which are associated to {\it generalized strings}. As we will see, some of these complexes in $D^b(A)$ constitute a class having infinite minimal projective resolution, when $A$ is a string algebra.

Let $A=kQ/I$ be a string algebra. Recall that, for each arrow $a\in Q_1$, we denote by $a^{-1}$  its \textit{formal inverse}, which verifies  $s(a^{-1})=t(a)$, $t(a^{-1})=s(a)$ and $(a^{-1})^{-1}=a$. Similarly, if $w=a_1\cdots a_n$ is a path in $Q$, the \textit{inverse path} of $w$ is given by $w^{-1}=a_n^{-1}\cdots a_1^{-1}$.  Then $s(a_i^{-1})=t(a_i)=s(a_{i+1})=t(a_{i+1}^{-1})$ and it is clear that $s(w^{-1})=t(w)$ and $t(w^{-1})=s(w)$.

Now, a \textit{walk} $\omega$ (resp. a \textit{generalized walk}) of length $n>0$ is a sequence $w_1\cdots w_n$, where each $w_i$ is either of the form $w$ or $w^{-1}$, $w$ being an arrow (resp. a path of positive length) and such that $t(w_i)=s(w_{i+1})$ for $i=1,\dots, n-1$. It is clear that $s(\omega)=s(w_1)$ and $t(\omega)=t(w_n)$. The notion of the \textit{inverse} of a walk (resp. of a generalized walk) is defined analogously as that for a path. Thus, the passage to inverses is an involutory transformation.

A \textit{closed walk} (resp. a \textit{closed generalized walk}) is a walk $\omega$ (resp. a generalized walk) such that $t(\omega)=s(\omega)$. In this case we consider its \textit{rotations} (or \textit{cyclic permutations}), denoted by $\omega[j]$, which are given by $\omega[j]=w_{j+1}\cdots w_nw_1\cdots w_j$, for $j=1,\dots,n-1$. 

The \textit{product} or \textit{concatenation} of two walks (resp. of two generalized walks) $\omega=w_1\cdots w_n$ and $\omega'=w_1'\cdots w_n'$ is defined as the walk (resp. generalized walk) $\omega\omega'=w_1\cdots w_nw_1'\cdots w_n'$, whenever $t(w_n)=s(w_1')$. 

We consider the following equivalence relation on the set of generalized walks, denoted by $\cong_s$: If $\upsilon$ and $\omega$ are two generalized walks, then 
$$
\upsilon\cong_s \omega\quad \text{if and only if}\quad \upsilon=\omega \ \text{or} \ \upsilon=\omega^{-1}.
$$

\begin{definition}
A \emph{string} is a walk $\omega=w_1\cdots w_n$ such that $w_{i+1}\neq w_i^{-1}$ for $1\leq i <n$ and such that no sub-word of $\omega$ or $\omega^{-1}$ is in $I$. The set of all strings in $(Q, I)$ will be denoted by $St$.
\end{definition}

\begin{example}
Consider the quiver $$Q: \ \xymatrix{1 \ar@<0.7ex>[r]^{a} \ar@<-0.5ex>[r]_c&2\ar@<0.7ex>[r]^{b} \ar@<-0.5ex>[r]_d &3}$$ with $I=\langle ad, cb\rangle$. We have that $\omega=abd^{-1}c^{-1}$ is a string but $\omega'=abb^{-1}c^{-1}$ and $\upsilon=abd^{-1}c^{-1}ad$ are not.
\end{example}

Now, we denote by $\overline{GSt}$ the set of all generalized walks $\omega=w_1\cdot w_2\cdots w_n$ satisfying

\begin{itemize}
	\item If $w_i, w_{i+1} \in \textbf{Pa}_{>0}$, then $w_iw_{i+1}\in I$.
	\item If $w_i^{-1}, w_{i+1}^{-1}\in \textbf{Pa}_{>0}$, then $w_{i+1}^{-1}w_i^{-1}\in I$.
	\item If $w_i, w_{i+1}^{-1} \in \textbf{Pa}_{>0}$ or $w_i^{-1}, w_{i+1} \in \textbf{Pa}_{>0}$, then $w_iw_{i+1}\in St$. 
\end{itemize}

We use the notation $GSt$ for a fixed set of representatives of the quotient of $\overline{GSt}$ over the equivalence relation $\cong_s$ together with all trivial paths. The elements of $GSt$ are called \textit{generalized strings}.

\begin{example}\label{Ex:Gst}
Consider the string algebra $A=kQ/I$ given by the quiver 
$$
Q: \ \xymatrix{1 \ar@<0.7ex>[r]^{a} \ar@<-0.5ex>[r]_c&2\ar@<0.7ex>[r]^{b} \ar@<-0.5ex>[r]_d &3}
$$ 
with $I=\langle ab, cd\rangle$. Then $\omega=a^{-1}\cdot c\cdot d\cdot (cb)^{-1}$ is a generalized string.
\end{example}

\begin{remark}
Notice that in example \ref{Ex:Gst} we use a different notation in order to distinguish  generalized strings from strings. Thus, we use a dot $\cdot$ between $w_i$ and $w_{i+1}$ in a generalized string. For instance, in the quiver  $Q: \ \xymatrix{2 \ar[r]^{a} & 1 & 3\ar[l]_{b}}$ we have that $ab^{-1}$ is a string and $a\cdot b^{-1}$ is a generalized string.
\end{remark}

Now, for a generalized walk $\omega=w_1\cdot w_2\cdot\cdots\cdot w_n$ we introduce the function $\mu_{\omega}: \{0,1,\ldots, n\}\longrightarrow\mathbb{Z}$, defined by 
$\mu_{\omega}(0):=0$ and 
$$\mu_{\omega}(i):=\left\{ \begin{array}{ll}
\mu_{\omega}(i-1)+1 ,  &  \text{if} \ \ w_i\in\textbf{Pa}_{>0}, \\
\mu_{\omega}(i-1)-1 ,  &  \text{if} \ \ w_i^{-1}\in\textbf{Pa}_{>0}.   
\end{array}
\right.$$

Thus, we set $\mu(\omega):=\min_{1\leq i\leq n}\{\mu_{\omega}(i)\}$. In example \ref{Ex:Gst}, for $\omega=a^{-1}\cdot c\cdot d\cdot (cb)^{-1}$, we have $\mu_{\omega}(0)=0, \mu_{\omega}(1)=-1, \mu_{\omega}(2)=0, \mu_{\omega}(3)=1, \mu_{\omega}(4)=0$, and hence $\mu(\omega)=-1$.

\begin{remark}
	In practice we will assume in general that $\mu_{\omega}(0)\leq \mu_{\omega}(n)$. This is possible because if $\mu_{\omega}(0)\geq\mu_{\omega}(n)$, then $\mu_{\omega^{-1}}(0)\leq\mu_{\omega^{-1}}(n)$ and $\omega^{-1}\cong_s \omega$. For instance, in the quiver of the previous example, for $\omega=b^{-1}\cdot a^{-1}$ we have that $\mu_{\omega}(0)=, \mu_{\omega}(1)=-1, \mu_{\omega}(2)=-2$ and for $\omega^{-1}=a\cdot b$, $\mu_{\omega^{-1}}(0)=0, \mu_{\omega^{-1}}(1)=1, \mu_{\omega^{-1}}(2)=2$.
\end{remark}

Now, for every generalized string, we will associate a finite projective complex called a \textit{string complex}.

\begin{definition}\label{def-st-comp}
Let  $\omega=w_1\cdot w_2\cdots w_n$ be a generalized string. Then  $P_{\omega}^\bullet$ is the projective complex $\xymatrix{\cdots P_{\omega}^i\ar[r]^{\partial_w^i}&P_{\omega}^{i+1}\cdots}$ defined as follows. The modules are given by
\begin{equation}\label{def-st-comp-mod}
P_{\omega}^i=\displaystyle\bigoplus_{j=0}^n\delta(\mu_{\omega}(j),i)P_{c(j)}
\end{equation}
where $c(0)=s(w_1)$, $c(j)=t(w_j)$ for $j>0$ and $\delta$ is the Kronecker delta. 
	
	The differential maps are given by $\partial_{\omega}^i=\left(\partial_{jk}^i\right)_{1\leq j,k\leq n}$, where
	\begin{equation}\label{def-string-complex-differentials}
	\partial_{jk}^i:=\left\{ \begin{array}{ll}
	p(w_{j+1}) ,  &  \text{if} \ \ w_{j+1}\in\emph{\textbf{Pa}}_{>0}, \mu_{\omega}(j)=i \ \text{and} \ k=j+1, \\
	p(w_j^{-1}),  &  \text{if} \ \ w_j^{-1}\in\emph{\textbf{Pa}}_{>0} , \mu_{\omega}(j)=i \ \text{and} \ k=j-1, \\
	0,                &  \text{otherwise}.   
	\end{array}
	\right.
	\end{equation}
	Also, for each trivial generalized string $e_i^{\pm 1}$, let us denote by $P_{e_i^{\pm} 1}^\bullet$, the following projective complex
	$$\xymatrix{\cdots\ar[r]&0\ar[r]&P^0=P_i\ar[r]^(0.7){\partial^0}&0\ar[r]&\cdots}$$
\end{definition}
\begin{example}
	Let $A=kQ/I$ be the string algebra given by the bound quiver $$Q: \ \xymatrix{1 \ar@<0.7ex>[r]^{a} \ar@<-0.5ex>[r]_c&2\ar@<0.7ex>[r]^{b} \ar@<-0.5ex>[r]_d &3}$$ with $I=\langle ab, cd\rangle$ and let $\omega$ be the generalized string $\omega=a^{-1}\cdot c\cdot d \cdot (cb)^{-1}=w_1\cdot w_2\cdot w_3\cdot w_4$. Then, as we have seen above, $\mu_{\omega}(0)=0, \mu_{\omega}(1)=-1, \mu_{\omega}(2)=0, \mu_{\omega}(3)=1, \mu_{\omega}(4)=0$. Thus, from \eqref{def-st-comp-mod} we have
	\begin{itemize}
		\item For $i=-1$, we have $P_{\omega}^{-1}=\displaystyle\bigoplus_{j=0}^4\delta(\mu_{\omega}(j),-1)P_{c(j)}=P_{c(1)}=P_{t(w_1)}=P_1$.
		\item For $i=0$, we have $P_{\omega}^{0}=\displaystyle\bigoplus_{j=0}^4\delta(\mu_{\omega}(j),0)P_{c(j)}=P_{c(0)}\oplus P_{c(2)}\oplus P_{c(4)}=P_2\oplus P_2\oplus P_1$.
		\item For $i=1$, we have $P_{\omega}^{1}=\displaystyle\bigoplus_{j=0}^4\delta(\mu_{\omega}(j),1)P_{c(j)}=P_{c(3)}=P_{t(w_3)}=P_3.$
	\end{itemize}
Now, let us calculate de differential maps. According to \eqref{def-string-complex-differentials},
$$\partial_{\omega}^{-1}=\left(\partial_{jk}^{-1}\right)_{0\leq j,k\leq 4}=\begin{pmatrix}
0                  & 0 & 0                 & 0  & 0\\
\boxed{p(a)}& 0 & \boxed{p(c)}& 0  & \boxed{0}\\
0                  & 0 & 0                & 0  & 0\\
0                  & 0 & 0                & 0  & 0\\
0                  & 0 & 0                & 0  & 0
\end{pmatrix}\longleftrightarrow\begin{pmatrix}
p(a) & p(c) & 0
\end{pmatrix}$$
and 
$$\partial_{\omega}^{0}=\left(\partial_{jk}^{0}\right)_{0\leq j,k\leq 4}=\begin{pmatrix}
0      & 0 & 0    & \boxed{0}       & 0\\
0      & 0 & 0    & 0                    & 0\\
0      & 0 & 0    & \boxed{p(d) } & 0\\
0      & 0 & 0    & 0                    & 0\\
0      & 0 & 0    & \boxed{p(cb)} & 0
\end{pmatrix}\longleftrightarrow\begin{pmatrix}
0 \\ p(d) \\ p(cb)
\end{pmatrix}$$

Hence, the string complex associated to $w$ is 
$$P_{\omega}^\bullet: \ \xymatrix{\cdots\ar[r] & 0\ar[r] & P_1\ar[r]^(0.3){\partial_{\omega}^{-1}}&P_2\oplus P_2\oplus P_1\ar[r]^(0.7){\partial_{\omega}^0}&P_3\ar[r]&0\ar[r]&\cdots}$$

We observe that there is a ``graphic way'' to construct this complex, as follows. We can represent the generalized string $\omega=a^{-1}\cdot c\cdot d \cdot (cb)^{-1}$ as
$$
\xymatrix{1\ar[rrd]_c&&2\ar[ll]_{a^{-1}}&&\\
                               &&2\ar[rr]^d        &&3\ar[dll]^{(cb)^{-1}}\\
                               &&1                     &&             }
$$
In the figure above, we put the vertices $s(w_1)=2$, $t(w_1)=1$, $t(w_2)=2$, $t(w_3)=3$ and $t(w_4)=1$ sequentially, such that we go to the left if $w_i$ is an inverse path and we move to the right if $w_i$ is a direct path. By replacing the vertex $i$ by the projective $P_i$ and writing the corresponding morphisms induced between these projective modules ($p(w_i): P_{s(w_i)}\longrightarrow P_{t(w_i)}$ if $w_i\in\textbf{Pa}_{\geq 1}$ or $p(w_i^{-1}): P_{t(w_i)}\longrightarrow P_{s(w_i)}$ if $w_i^{-1}\in\textbf{Pa}_{\geq 1}$), we obtain

$$
\xymatrix{P_1\ar[rr]^{p(a)}\ar[rrd]_{p(c)}&&P_2&&\\
	&&P_2\ar[rr]^{p(d)}        &&P_3\\
	&&P_1\ar[urr]_{p(cb)}                     &&             }
$$

This diagram represents the complex (we form direct sums by columns)

$$P_{\omega}^\bullet: \ \xymatrix{\cdots\ar[r] & 0\ar[r] & P_1\ar[r]^(0.3){\partial_{\omega}^{-1}}&P_2\oplus P_2\oplus P_1\ar[r]^(0.7){\partial_{\omega}^0}&P_3\ar[r]&0\ar[r]&\cdots}$$
\end{example}

\section{Periodic string complexes}\label{Sec:PSC}
In this section we will give a necessary and sufficient condition for a string complex to have infinite minimal projective resolution when the algebra is string. These complexes are called \textit{periodic string complexes.} We give a generalization of the cyclic sets $Q_c$, $\overline{GSt}_c$, $\overline{GSt}^c$ in \cite{Be-Me}, in order to characterize string complexes with infinite minimal projective resolution.

Let us begin by defining a condition of minimality on paths in the kernel of a morphism $p(w)$. Notice that this condition is automatic for the cases of gentle and SAG algebras, in which the minimal generators of kernels are arrows.

\begin{definition}
	Let $w, u\in\emph{\textbf{Pa}}_{>0}$ with $t(w)=s(u)$ and such that $wu=0$. We say that $w$ is \textit{minimal for $u$}  if no proper subpath $w'$ of $w$ with $t(w')=s(u)$ verifies $w'u=0$.
\end{definition}
Thus, $a^*$ in the notation of the Lemma \ref{Lem-St} satisfies this minimality condition, that is, $a^*$ is minimal for $w$.

Let us define a new set, which generalizes the sets of cyclic arrows $Q_c$ in \cite{Be-Me} and $Q_c^*$ in \cite{FGR}. Let $\textbf{Pa}_c$ be the set of paths $w\in\textbf{Pa}_{>0}$ for which there exist paths $w_m, w_{m-1}, \dots, w_1\in \textbf{Pa}_{>0}$ such that $t(w_m)=s(w)$, $t(w_i)=s(w_{i+1})$ for $i=1,\dots, m-1$, $s(w_1)=t(w_j)$ for some $1\leq j\leq m+1$, where $w_{m+1}=w$, with $w_iw_{i+1}=w_jw_1=0$. In addition we require that $w_i$ is minimal for $w_{i+1}$ and $w_j$ is minimal for $w_1$.

It is clear that $Q_c\subseteq Q_c^*\subseteq \textbf{Pa}_c$, and, in the case of gentle algebras (resp. SAG algebras), we have $Q_c=\textbf{Pa}_c$ (resp. $Q_c^*=\textbf{Pa}_c$). The elements of $\textbf{Pa}_c$ are called \textit{cyclic paths}. 

\begin{example}\label{Ex:Str:2}
Let $(Q, I)$ be the bound quiver
	$$ \xymatrix@C=4.5mm{              & 3\ar[dl]_c&                &\\
		1\ar[rr]_a&                & 2\ar[ul]_b&}
	$$
	with $I=\langle ca, abc\rangle$. Then $A=kQ/I$ is a string algebra over $k$, which is neither gentle nor SAG algebra. In this case we have that $Q_c$ and $Q_c^*$ are empty sets but $\emph{\textbf{Pa}}_c=\{a,bc,c,ab\}$.  
	
	Now, consider the generalized string $\omega=w_1=bc$. Then, $l(\omega)=1>0$, $\mu(\omega)=0$ and $\exists a\in\emph{\textbf{Pa}}_c$ such that $a\cdot \omega=a\cdot bc\in GSt$. The complex $P_{\omega}^\bullet$ is 
	$$P_{\omega}^\bullet: \ \xymatrix{\cdots\ar[r]&0\ar[r]&P_2\ar[r]^{p(bc)}&P_1\ar[r]&0\ar[r]&\cdots}$$         
	
	Since $abc=0$ and $ca=0$, then $\ker p(bc)=Aa$, $\ker p(a)=Ac$, $\ker p(c)=Aab$ and $\ker p(ab)=Ac$. Thus, the (minimal) projective resolution $P_{\beta(P_{\omega}^\bullet)^\bullet}^\bullet$ is
	
	$$
	\xymatrix{\cdots\ar[r]&P_1\ar[r]^{p(ab)}&P_3\ar[r]^{p(c)}&P_1\ar[r]^{p(ab)}&P_3\ar[r]^{p(c)}&P_1\ar[r]^{p(a)}&P_2\ar[r]^{p(bc)}&P_1\ar[r]&0}
	$$
	
	Hence $P_{\beta(P_{\omega}^\bullet)^\bullet}^\bullet\notin K^b(\text{pro} \ A)$.

   $$
   \xymatrix{  &            & \ar@{~>}[rr]^{bc} &  &\\
                & \ar[ru]^a \ar@{~>}@/^{4mm}/[rd]^a  &                         &   & \\
                 \ar@[]+<0.4cm,0.2cm>; [ur]+<-0.01cm,-0.3mm>^c&    & \ar@{~>}@/^{5mm}/[]+<0.05mm, 0.2mm>;[ll] +<0.4cm,0.1cm>^b     &   &  \\
                            &    &                           &   &}      
   $$                            
\end{example}
In this example we observe that $a$ is minimal for $bc$ and when we calculate the projective resolution of $P_{\omega}^\bullet$ we get the cycle $\{c, ab\}$, which is a subset of $\textbf{Pa}_c$. The consequence of this is that $P_{\beta(P_{\omega}^\bullet)^\bullet}^\bullet\notin K^b(\text{pro} \ A)$. This motivates one of the conditions in the following generalization of the special sets.

We denote by $\overline{GSt}_{cp}$ the set of generalized strings $\omega=w_1\cdot w_2\cdots w_n$ of positive length such that $\mu(\omega)=0$, there exists $w\in\textbf{Pa}_c$  with $w\cdot\omega\in GSt$  or $w\in\ker p(w_{l}^{-1})\cap \ker p(w_{l+1})$ for some even index $l> 0$ with $\mu_{\omega}(l)=0$,  where $w$ is minimal for $w_1$ or minimal for $w_{l}^{-1}$ and $w_{l+1}$, respectively. 

Also, we denote by $\overline{GSt}^{cp}$ the set of generalized strings $\omega=w_1\cdot w_2\cdots w_n$ of positive length such that $\mu(\omega)=\mu_{\omega}(n)$ and there exists $w\in \textbf{Pa}_c$ such that $\omega\cdot w^{-1}\in GSt$.

Thus, we have that these sets reduce to the corresponding one for the case of string almost gentle algebras in \cite{FGR} (resp. gentle algebras \cite{Be-Me}).

Using this new special sets we can generalize the characterization of periodic string complexes for SAG algebras given in Theorem 31, part 2 in \cite{FGR}. We will divide this characterization into several lemmas here below.

\begin{lemma}\label{Lemma7-n=1-converse}
	Let $\omega=w_1$ be a generalized string. If $\omega\in \overline{GSt}_{cp}$ or $\omega\in \overline{GSt}^{cp}$, then $P_{\beta(P_{\omega}^\bullet)^\bullet}^\bullet\notin K^b(\text{pro} \ A)$.
\end{lemma}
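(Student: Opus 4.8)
My plan is to reduce the length-one case to an explicit two-term complex and then show that its downward minimal projective resolution is forced to be periodic by the cyclic path supplied in the hypothesis.

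First I would unwind the hypotheses for $\omega=w_1$. I note that the second alternative defining $\overline{GSt}_{cp}$ is vacuous here, since it requires an even index $l$ with $0<l$ and $l+1\le n=1$. Reading off $\mu_\omega(0)=0$ and $\mu_\omega(1)=\pm 1$, membership $\omega\in\overline{GSt}_{cp}$ (with $\mu(\omega)=0$) forces the direct case $w_1\in\textbf{Pa}_{>0}$, together with a cyclic path $w\in\textbf{Pa}_c$ that is minimal for $w_1$ and satisfies $w\cdot\omega\in GSt$; by the first bullet defining $\overline{GSt}$ this says exactly $ww_1=0$. Symmetrically, $\omega\in\overline{GSt}^{cp}$ (with $\mu(\omega)=\mu_\omega(1)=-1$) forces the inverse case $v:=w_1^{-1}\in\textbf{Pa}_{>0}$, with $w\in\textbf{Pa}_c$ and $\omega\cdot w^{-1}\in GSt$, which by the second bullet gives $wv=0$. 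In either case $P_\omega^\bullet$ is concentrated in two consecutive degrees with a single differential $p(v)$ for a positive path $v$ (namely $v=w_1$, respectively $v=w_1^{-1}$), and the cyclic path $w$ lies in $\ker p(v)$.

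Next I would compute $\beta(P_\omega^\bullet)^\bullet$ and begin its resolution. By definition the good truncation prepends $\ker p(v)$ one degree below the bottom term, and $P_{\beta(P_\omega^\bullet)^\bullet}^\bullet$ is obtained by splicing in a minimal projective resolution of this kernel. By Lemma \ref{Lem-St}, $\ker p(v)=Aa^*\oplus Ab$, with minimal projective cover $P_{s(a^*)}\oplus P_{s(b)}$ and components $p(a^*),p(b)$; iterating, Lemma \ref{Lem-St} again presents every subsequent kernel as such a sum, and each step extends the resolution by the corresponding projective cover. Since every path occurring is in $\textbf{Pa}_{>0}$, all differentials land in the radical, so the whole complex lies in $\mathfrak{p}(A)$ and is minimal. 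The crux is now to show this resolution never terminates. In the direct case $w$ is minimal for $v$ and ends in one of the two arrows into $s(v)$, so minimality identifies it with $a^*$ or $b$; hence $P_{s(w)}$ is a summand of the first resolving term. Unwinding $w\in\textbf{Pa}_c$, there are paths $v_1,\dots,v_m,v_{m+1}=w$ in $\textbf{Pa}_{>0}$ with $v_i$ minimal for $v_{i+1}$ and a closing index $j$ with $v_j$ minimal for $v_1$. By Lemma \ref{Lem-St}, minimality of $v_i$ for $v_{i+1}$ means $v_i$ is a minimal generator of $\ker p(v_{i+1})$, so an induction along the chain shows that resolving $Av_{i+1}$ produces $Av_i$ among the next generators, and resolving $Av_1$ produces $Av_j$, re-entering the cycle. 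Consequently a cyclic path reappears at infinitely many stages, $P_{\beta(P_\omega^\bullet)^\bullet}^\bullet$ has nonzero terms in all sufficiently negative degrees, and a minimal unbounded projective complex is never homotopy equivalent to a bounded one; thus $P_{\beta(P_\omega^\bullet)^\bullet}^\bullet\notin K^b(\text{pro}\ A)$.

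I expect the main obstacle to be the bookkeeping in this last step: matching the abstract cyclic data of $\textbf{Pa}_c$ with the concrete minimal generators produced by Lemma \ref{Lem-St} at each stage. In particular, I must verify that the ``next'' cyclic path is genuinely the minimal generator $a^*$ rather than being hidden as a non-minimal element, and that the minimality conditions propagate so the cycle closes correctly; here the uniqueness of left completions (Lemma \ref{lem-left-completion}) is what pins down the generators and guarantees that once a cyclic path enters the resolution the periodicity is unavoidable. The inverse case $\overline{GSt}^{cp}$ is handled by the same mechanism after checking that $w$ (or its minimal suffix killing $v$) supplies a cyclic minimal generator of $\ker p(v)$, which is the only additional verification required.
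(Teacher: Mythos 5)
Your proposal is correct and follows essentially the same route as the paper's proof: reduce to the two-term complex $P_{s(v)}\xrightarrow{p(v)}P_{t(v)}$, use Lemma \ref{Lem-St} (together with the uniqueness of left completions, Lemma \ref{lem-left-completion}) to identify the cyclic path $w$ as a minimal generator of $\ker p(v)$, and then walk the chain of minimality relations defining $\textbf{Pa}_c$ to show a cyclic summand reappears in every kernel, so the minimal resolution never terminates. The only (harmless) differences are presentational: you treat $\overline{GSt}^{cp}$ uniformly via $v=w_1^{-1}$ and note the vacuousness of the second alternative in $\overline{GSt}_{cp}$ for $n=1$, whereas the paper handles only $\overline{GSt}_{cp}$ explicitly and declares the other case similar.
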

\begin{proof} We will consider only the case $\omega\in \overline{GSt}_{cp}$. The other case is similar.

	If $\omega=w_1\in \overline{GSt}_{cp}$, then there exists $w\in\textbf{Pa}_c$ such that $w\cdot\omega\in GSt$ with $w$ minimal for $w_1$ and $\mu(\omega)=0$.
	
	Since $\mu(\omega)=0$, then $w_1\in\textbf{Pa}_{>0}$ and the condition $w\cdot \omega\in GSt$ implies $ww_1=0$. Now, since $A=k(Q,I)$ is a string algebra, there are at most two arrows $a$ and $b$ ending at vertex $s(w_1)$.
	
	$$\xymatrix{            &\ar[rd]^{b}&                             &&& \\
		                           &                &\ar@{~>}[rrr]^{w_1}&&& \\
	                               &\ar[ru]_a   &                                      \\
   \ar@{~}[rruu]^(0.4)w&                &                             &&& }$$
   
   We know that it cannot occur that $aw_1\neq 0$ and $bw_1\neq 0$. Suppose, without loss of generality, that $bw_1=0$ and $w$ is a subpath of $\hat{a}a$. (Recall that $\hat{a}$ is unique for string algebras as a consequence of Lemma \ref{lem-left-completion}, and so $a^*=w$. Also, it is possible that $w=a$). Therefore, by Lemma \ref{Lem-St}, it follows that $\ker p(w_1)=Aw \oplus Ab$. (If $bw_1\neq 0$, then $aw_1=0$, $w=a$ and $\ker p(w_1)=Aa$, as in the gentle case. However, we consider here the more general case $\ker p(w_1)=Aw\oplus Ab$).
   
   We have $\mu_{\omega}(0)=0$, $\mu_{\omega}(1)=1$. Then the complex $P_{\omega}^\bullet$ is
   
   $$P_{\omega}^\bullet: \ \xymatrix{\cdots\ar[r]&0\ar[r]&P_{s(w_1)}\ar[r]^{p(w_1)}&P_{t(w_1)}\ar[r]&0\ar[r]&\cdots}$$
   
   In order to determine $P_{\beta(P_{\omega}^\bullet)^\bullet}^\bullet$, the projective resolution of $P_{\omega}^\bullet$, we first note that, since $$p(w): P_{s(w)}\longrightarrow Aw \quad \text{and} \quad p(b): P_{s(b)}\longrightarrow Ab$$
   are projective covers, then $\ker p(w_1)=Aw\oplus Ab$ can be covered with $P^{-1}=P_{s(w)}\oplus P_{s(b)}$ and the epimorphism 
   $$
   \begin{array}{rcl}
   \partial^{-1}: P_{s(w)}\oplus P_{s(b)} & \longrightarrow & Aw\oplus Ab\\
                                        (u,v)           & \mapsto             & (uw,vb)
   \end{array}
   $$
   where $\partial^{-1}=\begin{pmatrix}
   p(w)  & 0 \\ 0  &  p(b)
   \end{pmatrix}$. Besides, $\ker \partial^{-1}=\ker p(w)\oplus \ker p(b)$.
   Since $w\in\textbf{Pa}_c$, by Lemma \ref{Lem-St} we have $\ker p(w)=Au_m\oplus Ac$, where $c$ is an arrow such that $t(c)=s(w)$ and $u_m$ is a cyclic path minimal for $w$. Therefore $\ker\partial^{-1}=Au_m\oplus Ac\oplus\ker p(b)$. Furthermore, $\ker p(b)$ is at most two-dimensional, i.e., $\ker p(b)=Ab_1^*\oplus Ab_2$, for some arrows $b_1, b_2$ ending at $s(b)$ and $b_1^*$ is the smallest subpath of $\widehat{b_1}b_1$ such that $b_1^*b=0$ and $b_2b=0$ (recall that this is the general case in Lemma \ref{Lem-St}).
    
   $$\xymatrix{&       \ar[rd]^{b_2}     &               &                                                            &&&\\
  &                                                 	&\ar[rd]^{b}&                                                           &&& \\
   &                      	\ar[ru]_{b_1}     &                &\ar@{~>}[rrr]^{w_1}                              &&& \\
  &                                               	&\ar[ru]_a   &                                                                     \\
   &	\ar@{~}[rruu]^(0.4)w                  &                &                                                             &&& \\
                        \ar[ru]^c	&               & & \ar@{~>}[llu]^{u_m}                              &\ar@{.}[l]&&}$$
   Thus, $\ker p(b)$ can be covered by $P_{s(b_1^*)}\oplus P_{s(b_2)}$ and the corresponding epimorphisms. Hence, we can cover $\ker\partial^{-1}=Au_m\oplus Ac\oplus Ab_1^*\oplus Ab_2$ with $P^{-2}=P_{s(u_m)}\oplus P_{s(c)}\oplus P_{s(b_1^*)}\oplus P_{s(b_2)}$ and the epimorphism 
   $$\partial^{-2}=\begin{pmatrix}
   p(u_m) & 0      & 0            & 0 \\
   0          & p(c) & 0            & 0 \\
   0          & 0     & p(b_1^*) & 0 \\
   0          & 0     & 0            & p(b_2)
   \end{pmatrix}$$
   where $\ker \partial^{-2}=\ker p(u_m)\oplus\ker p(c)\oplus\ker p(b_1^*)\oplus\ker p(b_2)$. Again, since $w\in\textbf{Pa}_c$,  there is a cyclic path $u_{m-1}$, minimal for $u_m$, such that $u_{m-1}u_m=0$ and thus $Au_{m-1}$ is a direct summand of $\ker \partial^{-2}$. Hence $\ker \partial^{-2}$ can be covered by some projective $P^{-3}$ which has $P_{s(u_{m-1})}$ as a direct summand. 
   
   If we continue in this fashion, by the definition of the cyclic set of paths $\textbf{Pa}_c$, in finitely many steps we will obtain again $P_{s(u_j)}$, for some $1\leq j\leq m+1$, as a direct summand in the projective resolution $P_{\beta(P_{\omega  }^\bullet)^\bullet}^\bullet$ and, since $Q_1$ is finite, eventually the other summands will disappear.  We can repeat the process over the subset $\{u_1, u_2, \dots, u_j\}$ of the cyclic set $\textbf{Pa}_c$. This shows that $$P_{\beta(P_{\omega}^\bullet)^\bullet}^\bullet\notin K^b(\text{pro} \ A).$$
\end{proof}

\begin{lemma}\label{Lemma7-n=2-converse}
Let $\omega=w_1\cdot w_2$ be a generalized string.	If $\omega\in \overline{GSt}_{cp}$ or $\omega\in \overline{GSt}^{cp}$, then $P_{\beta(P_{\omega}^\bullet)^\bullet}^\bullet\notin K^b(\text{pro} \ A).$
\end{lemma}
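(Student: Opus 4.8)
The plan is to reduce the length-two case to the length-one case already settled in Lemma \ref{Lemma7-n=1-converse}: I want to exhibit, inside the good truncation $\beta(P_\omega^\bullet)^\bullet$, a direct summand of the form $Aw$ with $w\in\textbf{Pa}_c$, and then quote the periodic mechanism of that lemma. First I would invoke the standing convention $\mu_\omega(0)\leq\mu_\omega(2)$ (replacing $\omega$ by $\omega^{-1}\cong_s\omega$ if necessary, which changes $P_{\beta(P_\omega^\bullet)^\bullet}^\bullet$ only up to shift and hence does not affect membership in $K^b(\text{pro}\ A)$). Under this convention the hypotheses force exactly two shapes of $\mu_\omega$. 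If $w_1,w_2\in\textbf{Pa}_{>0}$ then $\mu_\omega=(0,1,2)$, which has $\mu(\omega)=0$ but $\mu(\omega)\neq\mu_\omega(2)$, so $\omega$ can lie only in $\overline{GSt}_{cp}$ (Case A). If $w_1\in\textbf{Pa}_{>0}$ and $w_2^{-1}\in\textbf{Pa}_{>0}$ then $\mu_\omega=(0,1,0)$, with $\mu(\omega)=0=\mu_\omega(2)$, so $\omega$ may lie in $\overline{GSt}_{cp}$ (left attachment) or in $\overline{GSt}^{cp}$ (right attachment) (Case B). The orientation $(w_1^{-1},w_2)\in\textbf{Pa}_{>0}$ gives $\mu(\omega)=-1$, satisfying neither hypothesis, while $(w_1^{-1},w_2^{-1})$ violates the convention and is carried into Case A by passing to $\omega^{-1}$.

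In Case A the complex is $\cdots\to 0\to P_{s(w_1)}\xrightarrow{p(w_1)}P_{t(w_1)}\xrightarrow{p(w_2)}P_{t(w_2)}\to 0\to\cdots$ concentrated in degrees $0,1,2$, so the truncation index is $t=0$ and $\beta(P_\omega^\bullet)^{-1}=\ker p(w_1)$. The left attachment $w\cdot\omega\in GSt$ with $w\in\textbf{Pa}_c$ minimal for $w_1$ means, by the first defining condition of $\overline{GSt}$, that $ww_1\in I$, i.e.\ $ww_1=0$; thus $w=a^*$ in the notation of Lemma \ref{Lem-St}, and $\ker p(w_1)=Aw\oplus Ab$. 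In particular $Aw$ is a direct summand of the degree $-1$ term of the good truncation. Exactly as in the proof of Lemma \ref{Lemma7-n=1-converse}, resolving $Aw$ produces minimal generators $u_m,u_{m-1},\dots\in\textbf{Pa}_c$ of the successive kernels (via Lemma \ref{Lem-St} at each stage), and the defining cyclicity of $\textbf{Pa}_c$ forces some $s(u_j)$ to reappear; since the minimal projective resolution of a direct sum is the direct sum of the resolutions, this nonterminating summand survives in $P_{\beta(P_\omega^\bullet)^\bullet}^\bullet$, so $P_{\beta(P_\omega^\bullet)^\bullet}^\bullet\notin K^b(\text{pro}\ A)$.

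In Case B the complex is $\cdots\to 0\to P_{s(w_1)}\oplus P_{t(w_2)}\xrightarrow{\partial^0}P_{t(w_1)}\to 0\to\cdots$ in degrees $0,1$, with $\partial^0=\begin{pmatrix}p(w_1)&p(w_2^{-1})\end{pmatrix}$, so again $t=0$ and $\beta(P_\omega^\bullet)^{-1}=\ker\partial^0$. The key step, which I expect to be the main obstacle, is to show that this kernel of a map out of a direct sum splits as $\ker\partial^0=\ker p(w_1)\oplus\ker p(w_2^{-1})$. Here I would use the string structure: since $w_1w_2\in St$, the reducedness of the walk at the valley vertex $t(w_1)$ forces $w_1$ and $w_2^{-1}$ to arrive at $t(w_1)$ through \emph{distinct} arrows, so every nonzero element of $\operatorname{im}p(w_1)=Aw_1$ and every nonzero element of $\operatorname{im}p(w_2^{-1})=Aw_2^{-1}$ have different last arrows and are therefore linearly independent; hence $(u,v)\in\ker\partial^0$ forces $uw_1=0$ and $vw_2^{-1}=0$ separately. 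Once this splitting is established, the attachment condition places $Aw$ with $w\in\textbf{Pa}_c$ as a summand of $\ker p(w_1)$ in the $\overline{GSt}_{cp}$ case ($ww_1=0$, $w$ minimal for $w_1$) or of $\ker p(w_2^{-1})$ in the $\overline{GSt}^{cp}$ case ($ww_2^{-1}=0$, $w$ minimal for $w_2^{-1}$), in both instances via Lemma \ref{Lem-St}. In either subcase the periodic argument of Lemma \ref{Lemma7-n=1-converse} applies verbatim to the summand $Aw$, yielding $P_{\beta(P_\omega^\bullet)^\bullet}^\bullet\notin K^b(\text{pro}\ A)$ and completing the proof.
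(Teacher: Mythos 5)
Your proof is correct and takes essentially the same route as the paper: a case split on the orientation of $w_2$, the decomposition $\ker\partial_\omega^0=\ker p(w_1)\oplus\ker p(w_2^{-1})$ forced by $w_1w_2\in St$ (the paper establishes this by expanding $u,v$ in path bases and citing linear independence, which is precisely your distinct-last-arrows observation made explicit), and then the identification of the cyclic summand $Aw$, $w\in\textbf{Pa}_c$, feeding into the iteration of Lemma \ref{Lemma7-n=1-converse}. Your upfront reduction of orientations via $\omega^{-1}$ and your explicit handling of the $\overline{GSt}^{cp}$ attachment inside Case B merely spell out what the paper dispatches as ``similar.''
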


\begin{proof}
	As before, we only consider the case $\omega\in \overline{GSt}_{cp}$.
	If $\omega=w_1\cdot w_2\in \overline{GSt}_{cp}$, then $\mu(\omega)=0$ and there exists $w\in\textbf{Pa}_c$ such that $w\cdot\omega=w\cdot w_1\cdot w_2\in GSt$ where $w$ is minimal for $w_1$.

Now, for $w_2$ we have two possibilities: $w_2\in\textbf{Pa}_{>0}$ or $w_{2}^{-1}\in\textbf{Pa}_{>0}$.\\
	
	\textit{Case 1} : if $w_2\in\textbf{Pa}_{>0}$, then $\mu_{\omega}(0)=0$, $\mu_{\omega}(1)=1$, $\mu_{\omega}(2)=2$ and $w_1w_2\in I$. The complex $P_{\omega}^\bullet$ is 
	
	$$P_{\omega}^\bullet: \ \xymatrix{\cdots\ar[r]&0\ar[r]&P_{s(w_1)}\ar[r]^(0.5){p(w_1)}&P_{t(w_1)}\ar[r]^{p(w_2)}&P_{t(w_2)}\ar[r]&0\ar[r]&\cdots}$$
	As in Lemma \ref{Lemma7-n=1-converse}, $\ker p(w_1)=Aw\oplus Ab$ has an infinite (minimal) projective resolution (because $w\in \textbf{Pa}_c$) and hence
	$$P_{\beta(P_{\omega }^\bullet)^\bullet}^\bullet\notin K^b(\text{pro} \ A).$$
	
	\textit{Case 2}: if $w_{2}^{-1}\in\textbf{Pa}_{>0}$, then $\mu_{\omega}(0)=0$, $\mu_{\omega}(1)=1$, $\mu_{\omega}(2)=0$ and $w_1w_2\in St$. The complex $P_{\omega}^\bullet$ is 
	
	$$P_{\omega}^\bullet: \ \xymatrix{\cdots\ar[r]&0\ar[r]&P_{s(w_1)}\oplus P_{t(w_2)}\ar[r]^(0.6){\partial_{\omega}^0}&P_{t(w_1)}\ar[r]&0\ar[r]&\cdots}$$ where $\partial_{\omega}^0=\begin{pmatrix}
	p(w_1) \\ p(w_2^{-1})
	\end{pmatrix}$.
	
	We first determine $\ker \partial_{\omega}^0$. For this, $\begin{pmatrix}
	u & v
	\end{pmatrix}\in\ker \partial_{\omega}^0$ if and only if $uw_1+vw_{2}^{-1}=0$ with $u\in P_{s(w_1)}$ and $v\in P_{t(w_2)}$.
	
	Since $A=k(Q,I)$ is a string algebra, there are at most two arrows $a$ and $b$ such that $t(a)=t(b)=s(w_1)$ and at most two arrows $c$ and $d$ such that $t(c)=t(d)=t(w_2)=s(w_{2}^{-1})$.
$$
\xymatrix@R=4.5mm{ \ar[rd]^{b}               &                                &&&\\
                                                &\ar@{~>}[rrr]^{w_1} &&&\ar@{~>}[ddlll]^{w_{2}} \\
                 \ar[ru]_a\ar[rd]^{d} &                                &&&\\
                                                &                                &&&\\
                  \ar[ru]_c                &                                &&&}
$$
Notice that it is not possible that $aw_1\neq 0$ and $bw_1\neq 0$ (similarly for $c$ and $d$). Suppose, without loss of generality, that $bw_1=0$, $cw_{2}^{-1}=0$ and $w$ is a subpath of $\hat{a}a$. Since $\omega\in \overline{GSt}_{cp}$, then $ww_1=0$ and therefore $w=a^*$. Thus $\ker p(w_1)=Aw\oplus Ab$. (If $bw_1\neq 0$, then $aw_1=0$, $w=a=a^*$ and $\ker p(w_1)=Aa$). Similarly, in the more general case we have that $\ker p(w_{2}^{-1})$ is of the form $Ac\oplus Ad^*$, where it is possible that $d^*=d$.

Let $\{e_{s(w_1)}, u_1,\dots, u_s\}$ and $\{e_{t(w_2)}, v_1,\dots, v_t\}$ be bases of $P_{s(w_1)}$ and $P_{t(w_2)}$, respectively. Then 
\begin{eqnarray*}
	u & = & \alpha_0e_{s(w_1)}+\alpha_1u_1+\cdots+\alpha_su_s\\
	v & = & \beta_0e_{t(w_2)}+\beta_1v_1+\cdots+\beta_tv_t
\end{eqnarray*}
for some scalars $\alpha_0, \alpha_1, \dots, \alpha_s, \beta_0, \beta_1, \dots, \beta_t$. Now, suppose that $w$ is a subpath of $u_1, \dots, u_k$, that $u_{k+1}, \dots, u_{k+r}$ have $a$ as their last arrow but are smaller paths than $w$ and $u_{k+r+1}, \dots, u_s$ have $b$ as their last arrow (reordering the basis if necessary).

Analogously, suppose that $d^*$ is a subpath of $v_1, \dots, v_q$, that $v_{q+1}, \dots, v_{q+l}$ have $d$ as their last arrow but are smaller paths that $d^*$, and $v_{q+l+1}, \dots, v_t$ have $c$ as their last arrow.

Then, from the condition $uw_1+vw_{2}^{-1}=0$  and the minimality of $w$ and $d^*$ it follows that 
\begin{eqnarray*}
0 & = & (\alpha_0e_{s(w_1)}+\alpha_1u_1+\cdots+\alpha_su_s)w_1+(\beta_0e_{t(w_2)}+\beta_1v_1+\cdots+\beta_tv_t)w_{2}^{-1} \\
   & = & \alpha_0w_1+\alpha_{k+1}u_{k+1}w_1+\cdots \alpha_{k+r}u_{k+r}w_1 \\ 
   &    & + \beta_0w_{2}^{-1}+\beta_{q+1}v_{q+1}w_{2}^{-1}+\cdots+\beta_{q+l}v_{q+l}w_{2}^{-1}
\end{eqnarray*}

Since $w_1w_2\in St$, then $w_1, u_{k+1}w_1,\dots, u_{k+r}w_1, w_{2}^{-1}, v_{q+1}w_{2}^{-1},\dots, v_{q+l}w_{2}^{-1}$ are linearly independent, and hence

$$\alpha_0=\alpha_{k+1},\cdots=\alpha_{k+r}=\beta_0=\beta_{q+1}=\cdots=\beta_{q+l}=0$$
Thus, $$u=\alpha_1u_1+\cdots+\alpha_ku_k+\alpha_{k+r+1}u_{k+r+1}+\cdots+\alpha_su_s\in Aw\oplus Ab$$ and
$$v=\beta_1v_1+\cdots+\beta_qv_q+\beta_{q+l+1}v_{q+l+1}+\cdots+\beta_tv_t\in Ad^*\oplus Ac$$
that is, $$\ker\partial_{\omega}^0=Aw\oplus Ab\oplus Ad^*\oplus Ac$$
We can cover this kernel with $P^{-1}=P_{s(w)}\oplus P_{s(b)}\oplus P_{s(d^*)}\oplus P_{s(c)}$ and the epimorphism $\partial^{-1}:P^{-1}\longrightarrow \ker\partial_{\omega}^0$, where
$$\partial^{-1}=\begin{pmatrix}
p(w) & 0     & 0         & 0\\
0     & p(b) & 0         & 0\\
0     & 0     & p(d^*)  & 0\\
0     & 0     & 0         & p(c)       
\end{pmatrix}$$
Notice that $\ker\partial^{-1}=\ker p(w)\oplus\ker p(b)\oplus \ker p(d^*)\oplus \ker p(c)$. Now, since $w\in\textbf{Pa}_c$, then, as in Lemma \ref{Lemma7-n=1-converse} we have that $\ker p(w)=Au_m\oplus Af$, where $u_m$ is a cyclic path minimal for $w$ and $f$ is an arrow such that $t(f)=s(w)$. In addition, each of $\ker p(b)$, $\ker p(d^*)$ and $\ker p(c)$ is at most two-dimensional as a $k$-vector space and each one can be covered by some projectives $P_{r_b}\oplus P_{l_b}$, $P_{r_{d^*}}\oplus P_{l_{d^*}}$ and $P_{r_c}\oplus P_{l_c}$, respectively. Thus, $\ker \partial^{-1}$ can be covered by $P^{-2}=P_{s(u_m)}\oplus P_{s(f)}\oplus P_{r_b}\oplus P_{l_b}\oplus P_{r_{d^*}}\oplus P_{l_{d^*}}\oplus P_{r_c}\oplus P_{l_c}$ and the epimorphism 
$$\partial^{-2}=\begin{pmatrix}
p(u_m) & 0        & \cdots & 0 \\
0          & *        & \cdots & 0 \\ 
 \vdots & \vdots &\ddots &  \vdots  \\
 0         &  0    &\cdots     & *
\end{pmatrix}$$
where $\ker \partial^{-2}=\ker p(u_m)\oplus M$, and $M$ is an $A$-module. Now, since $w\in \textbf{Pa}_c$, we have that $\ker p(u_m)=Au_{m-1}\oplus Ag$, where $u_{m-1}$ is a cyclic path minimal for $u_m$ and $g$ is an arrow ending at $s(u_m)$. Hence, we can cover $\ker\partial^{-2}$ by some projective $P^{-3}$ which has $P_{s(u_{m-1})}$ as a direct summand. If we continue the reasoning in the same way, by the definition of $\textbf{Pa}_c$, in finitely many steps we will get again $P_{s(u_j)}$, for some $1\leq j\leq m+1$, as a direct summand in the projective resolution $P_{\beta(P_{\omega}^\bullet)^\bullet}^\bullet$ and the other summands will eventually vanish because $Q_1$ is finite. We can repeat indefinitely the same process over the subset $\{u_1, u_2, \dots, u_j\}$ of the cyclic set of paths $\textbf{Pa}_c$. The conclusion is that $$P_{\beta(P_{\omega}^\bullet)^\bullet}^\bullet\notin K^b(\text{pro} \ A).$$
\end{proof}

\begin{lemma}\label{lemma7-n-converse}
	Let $\omega=w_1\cdot w_2\cdots w_n$ be a generalized string. If $\omega\in \overline{GSt}_{cp}$ or $\omega\in \overline{GSt}^{cp}$, then $P_{\beta(P_{\omega}^\bullet)^\bullet}^\bullet\notin K^b(\text{pro} \ A).$
\end{lemma}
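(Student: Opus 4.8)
The plan is to reduce the general statement to the two base cases already proved in Lemmas \ref{Lemma7-n=1-converse} and \ref{Lemma7-n=2-converse} by localizing the construction at the bottom degree of $P_\omega^\bullet$. Assume first $\omega\in\overline{GSt}_{cp}$, so $\mu(\omega)=0$ and degree $0$ is the lowest nonzero place of the complex; the good truncation $\beta(P_\omega^\bullet)^\bullet$ therefore forces the computation of $\ker\partial_\omega^0$, where $\partial_\omega^0\colon P_\omega^0\to P_\omega^1$. By \eqref{def-st-comp-mod}, $P_\omega^0=\bigoplus_{j\,:\,\mu_\omega(j)=0}P_{c(j)}$ is the direct sum over the ``valleys'' of $\omega$, while $P_\omega^1$ is the sum over the indices at level $1$.

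First I would determine the shape of $\partial_\omega^0$. At a valley index $j$ both neighbours $j\pm 1$ (when they exist) lie at level $1$, and by \eqref{def-string-complex-differentials} the only nonzero blocks leaving $P_{c(j)}$ are $p(w_{j+1})$ into $P_{c(j+1)}$ (when $w_{j+1}\in\textbf{Pa}_{>0}$) and $p(w_j^{-1})$ into $P_{c(j-1)}$ (when $w_j^{-1}\in\textbf{Pa}_{>0}$). Dually, each level-$1$ index $k$ receives contributions only from its valley neighbours, and the cancellation condition there has the form $u_{k-1}w_k+u_{k+1}w_{k+1}^{-1}=0$, which is exactly the local relation treated in Case $2$ of Lemma \ref{Lemma7-n=2-converse}. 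The key step is to run that linear-independence argument simultaneously at every level-$1$ index: since $\omega$ is a generalized string, no sub-word of $\omega$ or $\omega^{-1}$ lies in $I$, so the paths $u_iw_k$ and $v_iw_{k+1}^{-1}$ attached to distinct branch points remain linearly independent. This produces a clean splitting $\ker\partial_\omega^0=\bigoplus_j(\text{local summand at the valley } j)$, in which, by Lemma \ref{Lem-St}, each local summand has the form $Aa^*\oplus Ab$.

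Next I would locate the cyclic seed. By hypothesis there is $w\in\textbf{Pa}_c$ minimal either for $w_1$ (the left-end valley, when $\mu_\omega(0)=0$) or for the pair $w_l^{-1},w_{l+1}$ at some interior even index $l$ with $\mu_\omega(l)=0$; in either case $w$ appears as one of the minimal generators of the local summand at that valley, so $Aw$ is a direct summand of $\ker\partial_\omega^0$. From here the argument copies the base cases: covering $Aw$ by $P_{s(w)}$ and iterating, the defining property of $\textbf{Pa}_c$ yields an infinite sequence of cyclic paths $u_m,u_{m-1},\dots$ that eventually returns to some $u_j$ and repeats, so the minimal projective resolution never terminates, while the finitely many non-cyclic summands contributed by the other valleys produce only finite resolutions and vanish after finitely many steps because $Q_1$ is finite. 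Hence $P_{\beta(P_\omega^\bullet)^\bullet}^\bullet\notin K^b(\text{pro }A)$.

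The case $\omega\in\overline{GSt}^{cp}$ is entirely analogous: now $\mu(\omega)=\mu_\omega(n)$, so the bottom degree is attained at the right endpoint, the condition $\omega\cdot w^{-1}\in GSt$ with $w\in\textbf{Pa}_c$ places $w$ in the kernel of the map leaving the right-end valley, and the same decomposition and infinite-tail argument apply at that valley. I expect the principal obstacle to be precisely the simultaneous linear-independence bookkeeping in the kernel computation: one must check that the local relations at all level-$1$ indices are jointly independent and that the resulting direct-sum splitting genuinely isolates the cyclic summand $Aw$ rather than entangling it with neighbouring branches. Once this splitting is secured, the infinite-tail mechanism is a direct repetition of Lemmas \ref{Lemma7-n=1-converse} and \ref{Lemma7-n=2-converse}.
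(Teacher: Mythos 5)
Your proposal is correct and takes essentially the same route as the paper: both reduce to computing $\ker\partial_{\omega}^0$, use the linear-independence argument of Lemma \ref{Lemma7-n=2-converse} (Case 2) to isolate a direct summand $Aw$ with $w\in\textbf{Pa}_c$ (attached either at the left-end valley or at an interior valley with $\mu_{\omega}(l)=0$), and then iterate projective covers using the cyclic structure of $\textbf{Pa}_c$ and the finiteness of $Q_1$ to force an unbounded minimal resolution, with the $\overline{GSt}^{cp}$ case handled dually. The only differences are organizational and harmless: you decompose the kernel over all valleys simultaneously (as the paper does only in the converse direction, Lemma \ref{Lemma7-n-direct}) whereas the paper's proof here just splits on whether $w_2$ is direct or inverse and isolates the single relevant valley; note merely that at interior valleys the local summand is the intersection $\ker p(w_l^{-1})\cap\ker p(w_{l+1})$ rather than literally of the form $Aa^*\oplus Ab$ given by Lemma \ref{Lem-St}, which does not affect the argument since the hypothesis places $w$ in that intersection.
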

\begin{proof}
	Let $\omega=w_1\cdot w_2\cdots w_n$ be a generalized string. Here we will consider the case $\omega\in \overline{GSt}_{cp}$. The case $\omega\in \overline{GSt}^{cp}$ is similar. 
	
	If $\omega\in \overline{GSt}_{cp}$, then $\mu(\omega)=0$ and there exists $w\in\textbf{Pa}_c$ such that $w\cdot \omega=w\cdot w_1\cdot w_2\cdots w_n\in GSt$ or such that $w\in\ker p(w_{l}^{-1})\cap\ker p(w_{l+1})$ for some even index $l> 0$ for which $\mu_{\omega}(l)=0$, where $w$ is minimal for $w_1$ or $w$ is minimal for $w_{l}^{-1}$ and $w_{l+1}$, respectively

	Also, we will consider the case where $w\cdot \omega\in GSt$. The case in which $w\in\ker p(w_{l}^{-1})\cap\ker p(w_{l+1})$ is completely analogous. Since $\mu(\omega)=0$, it follows that $w_1\in\textbf{Pa}_{>0}$ and hence $ww_1=0$.
	
	Since $A=k(Q, I)$ is a string algebra, there are at most two arrows $a$ and $b$ such that $t(a)=t(b)=s(w_1)$. As before, we consider the general situation in which $bw_1=0$ and $w=a^*$. Thus $\ker p(w_1)=Aw\oplus Ab$.
	
	We must consider two cases:\\
	
	\textit{Case 1:} $w_2\in\textbf{Pa}_{>0}$. In this case we have $\mu_{\omega}(0)=0$, $\mu_{\omega}(1)=1$, $\mu_{\omega}(2)=2$ and $w_1w_2\in I$. We represent this situation as follows:
	
	$$
	\xymatrix{\cdot\ar@{~>}[rr]^{w_1}&&\cdot\ar@{~>}[rr]^{w_2}&&\cdot\ar@{.}[dl]\ar@{.}[r]&\\
		&&                                    &&}
	$$
	The possibilities for some of the next values of $\mu_{\omega}$ are:
	
	$$\mu_{\omega}(3)=\left\{\begin{array}{l}
	1 \\
	3
	\end{array}\right. ,\quad \mu_{\omega}(4)=\left\{\begin{array}{l} 0\\2\\4\end{array}\right.,\quad \mu_{\omega}(5)=\left\{\begin{array}{l} 1\\3\\5\end{array}\right.$$
	Thus, in the first two places of the complex $P_{\omega}^\bullet$ we have 
	$$P_{\omega}^0=P_{s(w_1)}\oplus0\oplus0\oplus0\oplus \delta(\mu_{\omega}(4),0)P_{t(w_4)}\oplus\cdots$$
	and
	$$P_{\omega}^1=0\oplus P_{t(w_1)}\oplus0\oplus \delta(\mu_{\omega}(3),1)P_{t(w_3)}\oplus 0 \oplus \delta(\mu_{\omega}(5),1)P_{t(w_5)}\oplus\cdots$$
	The complex $P_{\omega}^\bullet$ is of the form
	$$P_{\omega}^\bullet: \ \xymatrix{\cdots\ar[r]&0\ar[r]&P_{\omega}^0\ar[r]^{\partial_{\omega}^0}&P_{\omega}^1\ar[r]^{\partial_{\omega}^1}&P_{\omega}^2\ar[r]&\cdots}$$ 
	where $$\partial_{\omega}^0 = \begin{pmatrix} 
	0         & p(w_1)   &   0       &  0         & 0 &\cdots  & 0          \\
	0         & 0           &   0       &  0         & 0 &\cdots  & 0          \\
	0         & 0           &   0       &  0         & 0 &\cdots  & 0          \\
	0         & 0           &   0       &  0         & 0 &\cdots  & 0          \\ 
	0         & 0           &   0       &  \ast     & 0 &\cdots & 0          \\
	\vdots & \vdots   & \vdots & \vdots   & 0 &\ddots & \vdots  \\
	0         & 0          &  0        & 0          & 0 & \cdots & 0
	\end{pmatrix}
	$$
	and the morphism $\ast: \delta(\mu_{\omega}(4), 0)P_{t(w_4)}\longrightarrow \delta(\mu_{\omega}(3),1)$ is $p(w_4^{-1})$ or zero. Also, it is possible that other entries in the matrix are nonzero, but the important fact is that they are not in the second column.
	
	Now, $(u_0, u_1, \dots , u_n)\in \ker \partial_{\omega}^0$ if and only if $u_0w_1=0$ together with other equations. This shows that $u_0\in\ker p(w_1)=Aw\oplus Ab$ and hence $Aw\oplus Ab$ is a direct summand of $\ker \partial_{\omega}^0$. As in Lemma \ref{Lemma7-n=1-converse}, since $w\in\textbf{Pa}_c$, it follows that $Aw$ has an infinite (minimal) projective resolution, whence the same is true for $\ker\partial_{\omega}^0$. We conclude that  $$P_{\beta(P_{\omega}^\bullet)^\bullet}^\bullet\notin K^b(\text{pro} \ A).$$
	
	\textit{Case 2:} $w_{2}^{-1}\in\textbf{Pa}_{>0}$. In this case we have $\mu_{\omega}(0)=0$, $\mu_{\omega}(1)=1$, $\mu_{\omega}(2)=0$ and $w_1w_2\in St$. Additionally, since $\mu(\omega)=0$, then $w_3\in\textbf{Pa}_{>0}$, that is $\mu_{\omega}(3)=1$. The following diagram represents the situation.
	
	$$
	\xymatrix{\cdot\ar@{~>}[rrr]^{w_1}&&&\cdot\ar@{~>}[dlll]^{w_2}&&\\
		\cdot\ar@{~>}[rrr]_{w_3}&&& \cdot\ar@{.}[dl]\ar@{.}[r]                  &&\\
		&&&                                                 &&}
	$$
	The possibilities for some of the next values of $\mu_\omega$ are
	$$\mu_{\omega}(4)=\left\{\begin{array}{l}
	0 \\
	2
	\end{array}\right. ,\quad \mu_{\omega}(5)=\left\{\begin{array}{l} 1\\3\end{array}\right.$$
	Therefore, for $P_{\omega}^0$ and $P_{\omega}^1$ we have
	$$P_{\omega}^0=P_{s(w_1)}\oplus0\oplus P_{t(w_2)}\oplus0\oplus \delta(\mu_{\omega}(4),0)P_{t(w_4)}\oplus0\oplus\cdots$$
	and
	$$P_{\omega}^1=0\oplus P_{t(w_1)}\oplus0\oplus P_{t(w_3)}\oplus 0 \oplus \delta(\mu_{\omega}(5),1)P_{t(w_5)}\oplus 0\oplus\cdots$$
	The complex $P_{\omega}^\bullet$ is of the form
	$$P_{\omega}^\bullet: \ \xymatrix{\cdots\ar[r]&0\ar[r]&P_{\omega}^0\ar[r]^{\partial_{\omega}^0}&P_{\omega}^1\ar[r]^{\partial_{\omega}^1}&P_{\omega}^2\ar[r]&\cdots}$$ 
	where $$\partial_{\omega}^0 = \begin{pmatrix} 
	0         & p(w_1)           &   0       &  0         &\cdots  & 0          \\
	0         & 0                   &   0       &  0         &\cdots  & 0          \\
	0         & p(w_2^{-1})   &   0       & p(w_3)   &\cdots  & 0          \\
	0         & 0                   &   0       &  0         &\cdots  & 0          \\ 
	\vdots & \vdots           & \vdots & \vdots   & \ddots & \vdots  \\
	0         & 0                   &  0        & 0          &  \cdots & 0
	\end{pmatrix}
	$$
	and possibly other entries of the matrix are nonzero but they are not in the second column.
	Now, $(u_0, u_1, u_2, \dots, u_n)\in \ker\partial_{\omega}^0$ if and only if $u_0w_1+u_2w_{2}^{-1}=0$, $u_2w_3+\delta(\mu_{\omega}(4),0)u_4w_{4}^{-1}=0$ and other equations. This implies, by the similar arguments as in Lemma \ref{Lemma7-n=2-converse}, case 2, that $u_0\in Aw\oplus Ab$ and $u_2\in \ker p(w_2^{-1})\cap\ker p(w_3)=\left(Ad^*\oplus Ac\right)\cap \ker p(w_3)$, where $b, c, d\in Q_1$ are such that $t(b)=s(w_1)$ and $t(c)=t(d)=s(w_{2}^{-1})=t(w_2)$. The important facts are that $Aw$ is a direct summand of $\ker\partial_{\omega}^0$ and $w\in\textbf{Pa}_c$. Thus, as in Lemmas \ref{Lemma7-n=1-converse} and \ref{Lemma7-n=2-converse} we conclude that $$P_{\beta(P_{\omega}^\bullet)^\bullet}^\bullet\notin K^b(\text{pro} \ A).$$
\end{proof}
Now, we state and prove the converse of the previous results
\begin{lemma}\label{Lemma7-n=1-direct}
	Let $\omega=w_1$ be a generalized string. If $P_{\beta(P_{\omega}^\bullet)^\bullet}^\bullet\notin K^b(\text{pro} \ A)$, then either $\omega\in \overline{GSt}_{cp}$ or $\omega\in \overline{GSt}^{cp}$.
\end{lemma}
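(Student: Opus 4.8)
The plan is to read off the minimal projective resolution produced by the good truncation and to show that its infinitude is forced by a cyclic path. Since $\omega=w_1$ has length one, only the orientation of the single letter matters, and I would carry out the case $w_1\in\textbf{Pa}_{>0}$ in full, the case $w_1^{-1}\in\textbf{Pa}_{>0}$ being symmetric via $\omega^{-1}$. For $w_1\in\textbf{Pa}_{>0}$ one has $\mu_\omega(0)=0$ and $\mu_\omega(1)=1$, so $P_\omega^\bullet$ is the two-term complex $P_{s(w_1)}\xrightarrow{p(w_1)}P_{t(w_1)}$ in degrees $0,1$; its good truncation below $t=0$ attaches $\ker\partial_\omega^0=\ker p(w_1)$ in degree $-1$, and therefore $P_{\beta(P_\omega^\bullet)^\bullet}^\bullet$ is obtained by splicing the minimal projective resolution of the module $\ker p(w_1)$ onto $P_\omega^\bullet$ at the bottom. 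Hence $P_{\beta(P_\omega^\bullet)^\bullet}^\bullet\notin K^b(\text{pro}\ A)$ exactly when $\ker p(w_1)$ has infinite projective dimension, and by Lemma \ref{Lem-St} we may write $\ker p(w_1)=Aa^*\oplus Ab$, so one of the summands $Aa^*$, $Ab$ must have infinite projective dimension.

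The key step is the assertion that, for any $u\in\textbf{Pa}_{>0}$, the module $Au$ has infinite projective dimension if and only if $u\in\textbf{Pa}_c$. To obtain the direction I need, note that the projective cover of the cyclic module $Au$ is $P_{s(u)}$ with first syzygy $\ker p(u)$, which by Lemma \ref{Lem-St} is again a direct sum of at most two modules of the form $Av$, each minimal generator $v$ being minimal for $u$ and satisfying $vu=0$. Iterating, I would form the finite directed graph on the finite set $\textbf{Pa}_{>0}$ (finite because $A$ is finite-dimensional) with an edge $u\to v$ whenever $v$ is a minimal generator of $\ker p(u)$. The minimal projective resolution of $Au$ then terminates along a branch precisely when that branch reaches a vertex $v$ with $\ker p(v)=0$, so $Au$ has infinite projective dimension if and only if some walk starting at $u$ is infinite, which in a finite digraph happens if and only if $u$ can reach a directed cycle. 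Such a cycle, together with the walk joining it to $u$, is exactly the data $w_1,\dots,w_m$ with the minimality and annihilation conditions in the definition of $\textbf{Pa}_c$; hence $u\in\textbf{Pa}_c$.

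Applying this to the summand of $\ker p(w_1)$ of infinite projective dimension yields a path $w\in\textbf{Pa}_c$ that is a minimal generator of $\ker p(w_1)$; in particular $w$ is minimal for $w_1$ and $ww_1=0$. Since $w$ and $w_1$ are both direct and $ww_1\in I$, the concatenation $w\cdot\omega=w\cdot w_1$ satisfies the defining condition of $\overline{GSt}$ and so lies in $GSt$; as $\mu(\omega)=0$, this places $\omega$ in $\overline{GSt}_{cp}$. In the symmetric case $w_1^{-1}\in\textbf{Pa}_{>0}$ the complex is $P_{t(w_1)}\xrightarrow{p(w_1^{-1})}P_{s(w_1)}$ in degrees $-1,0$, the bottom kernel is $\ker p(w_1^{-1})$, and the same argument produces $w\in\textbf{Pa}_c$ minimal for $w_1^{-1}$ with $ww_1^{-1}=0$; since $\omega$ and $w^{-1}$ are both inverse letters and $ww_1^{-1}\in I$, the concatenation $\omega\cdot w^{-1}$ lies in $GSt$ and $\mu(\omega)=\mu_\omega(n)$, so $\omega\in\overline{GSt}^{cp}$.

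The main obstacle is the claim of the second paragraph: turning the mere infinitude of the resolution into a genuine element of $\textbf{Pa}_c$. The delicate points are that the syzygy process is finitely branching and exhausts a finite set of paths, so that an infinite branch must close into a cycle, and that the ``minimal for'' relation is preserved all along this branch, so that the extracted cycle really satisfies the minimality requirements built into $\textbf{Pa}_c$ rather than being a mere repetition of vertices. Once the cycle is in hand, verifying the $GSt$ concatenation conditions and the value of $\mu(\omega)$ in the last paragraph is routine.
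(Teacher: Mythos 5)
Your proposal is correct and follows essentially the same route as the paper's proof: both reduce the hypothesis to the statement that $\ker p(w_1)=Aa^*\oplus Ab$ (Lemma \ref{Lem-St}) has infinite minimal projective resolution, iterate the syzygy construction, and use finiteness of $\textbf{Pa}_{>0}$ to close the resulting chain of minimal annihilating paths into a cycle, producing $w\in\textbf{Pa}_c$ with $w\cdot\omega\in GSt$ (resp.\ $\omega\cdot w^{-1}\in GSt$ in the dual case). Your only real departure is presentational: you formalize the paper's informal ``continue the process, finiteness forces a repetition'' step as reachability of a directed cycle in a finite digraph on $\textbf{Pa}_{>0}$, which makes that step more rigorous but does not change the argument.
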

\begin{proof}
	Let $\omega=w_1\in GSt$. Suppose first that $w_1\in\textbf{Pa}_{>0}$, so $\mu_{\omega}(0)=0$ and $\mu_{\omega}(1)=1$. Then $\mu(\omega)=0$ and the complex $P_{\omega}^\bullet$ is 
	$$P_{\omega}^\bullet: \ \xymatrix{\cdots\ar[r]&0\ar[r]&P_{s(w_1)}\ar[r]^{p(w_1)}&P_{t(w_1)}\ar[r]&0\ar[r]&\cdots}$$
	The hypothesis $P_{\beta(P_{\omega}^\bullet)^\bullet}^\bullet\notin K^b(\text{pro} \ A)$ implies that $\ker p(w_1)\neq 0$. Besides, since $A$ is a string algebra, the possibilities for this kernel are: $\ker p(w_1)=Aa^*$ or $\ker p(w_1)=Aa^*\oplus Ab$, where $a, b$ are arrows ending at the vertex $s(w_1)$. 
	
	$$\xymatrix{            &\ar[rd]^{b}&                             &&& \\
		&                &\ar@{~>}[rrr]^{w_1}&&& \\
		&\ar[ru]_a   &                                      }
	$$
	
	We will consider the general case $\ker p(w_1)=Aa^*\oplus Ab$ (it is possible that $a^*=a$). By the hypothesis, $Aa^*\oplus Ab$ has infinite minimal projective resolution, which means that the same is true for either $Aa^*$ or $Ab$. Suppose, without loss of generality, that the minimal projective resolution of $Aa^*$ is not bounded. The projective cover of $Aa^*$ is given by $p(a^*): P_{s(a^*)}\longrightarrow Aa^*$. Again, the hypothesis allows us to choose a path $a_m^*$ such that $a_m$ is an arrow with $t(a_m)=s(a^*)$, $a_m^*a^*=0$ and $\ker p(a_m)$ has infinite minimal projective resolution. 
	
	Analogously, there is a path $a_{m-1}^*$ where $a_{m-1}$ is an arrow with $t(a_{m-1})=s(a_m^*)$, $a_{m-1}^*a_m^*=0$ and $\ker p(a_{m-1}^*)$ has infinite minimal projective resolution. If we continue the process in the same way, since $\textbf{Pa}_{>0}$ is finite, we see that there exist paths $w=a^*, w_m=a_m^*, w_{m-1}=a_{m-1}^* \dots, w_1=a_1^*$ such that $t(w_m)=s(w)$, $t(w_i)=s(w_{i+1})$ for $i=1,\dots, m-1$, $s(w_1)=t(w_j)$ for some $1\leq j\leq m+1$, where $w_{m+1}=w$, and  $w_iw_{i+1}=w_jw_1=0$. By our construction, it is clear that $w_i$ is minimal for $w_{i+1}$ and $w_j$ is minimal for $w_1$.  Since $ww_1=0$, we have shown that there exists $w\in\textbf{Pa}_c$ such that $w\cdot \omega\in GSt$ and $w$ is minimal for $w_1$. Therefore $\omega\in \overline{GSt}_{cp}$.
	
	Finally, if $w_{1}^{-1}\in\textbf{Pa}_{>0}$, by dual arguments it can be shown that $\omega\in\overline{GSt}^{cp}$.
\end{proof}

\begin{lemma}\label{Lemma7-n=2-direct}
	Let $\omega=w_1\cdot w_2\in GSt$. If $P_{\beta(P_{\omega}^\bullet)^\bullet}^\bullet\notin K^b(\text{pro} \ A)$, then either $\omega\in\overline{GSt}_{cp}$ or $\omega\in\overline{GSt}^{cp}$.
\end{lemma}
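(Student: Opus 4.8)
The plan is to transport the argument of Lemma \ref{Lemma7-n=1-direct} to each of the two shapes that a length-two generalized string can take, reading the bottom kernel of $P_\omega^\bullet$ off the computations already carried out in Lemma \ref{Lemma7-n=2-converse}. As a preliminary normalization, note that the good truncation $\beta$ only modifies the lowest-degree term of $P_\omega^\bullet$, so that $P_{\beta(P_\omega^\bullet)^\bullet}^\bullet\notin K^b(\text{pro}\ A)$ is equivalent to the statement that the kernel $K$ of the differential leaving that lowest-degree term has infinite minimal projective dimension. Using the standing convention $\mu_\omega(0)\le\mu_\omega(n)$ I place the minimum of $\mu_\omega$ at the initial vertex, so that $w_1\in\textbf{Pa}_{>0}$ and the lowest term sits in degree $0=\mu(\omega)$, the reversed orientation being symmetric. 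The entire proof then consists of extracting a cyclic path from $K$.

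First I would treat the case $w_2\in\textbf{Pa}_{>0}$. Here $\mu_\omega=(0,1,2)$, the complex is $P_{s(w_1)}\xrightarrow{p(w_1)}P_{t(w_1)}\xrightarrow{p(w_2)}P_{t(w_2)}$, and the relevant kernel is $K=\ker p(w_1)$, which is exactly the module analysed in Lemma \ref{Lemma7-n=1-direct}. I would invoke that analysis verbatim: by Lemma \ref{Lem-St} one has $K=Aa^*\oplus Ab$, the hypothesis forces one summand $Ax$ to have unbounded resolution, and iterating projective covers (using the uniqueness of left completions from Lemma \ref{lem-left-completion} and the finiteness of $Q_1$) yields a path $x\in\textbf{Pa}_c$ that is minimal for $w_1$ with $xw_1=0$. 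Since $\mu(\omega)=0$ and $x\cdot\omega\in GSt$, this gives $\omega\in\overline{GSt}_{cp}$.

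Next I would treat the genuinely two-dimensional case $w_2^{-1}\in\textbf{Pa}_{>0}$. Now $\mu_\omega=(0,1,0)$, the string condition $w_1w_2\in St$ holds, and $\partial_\omega^0=\binom{p(w_1)}{p(w_2^{-1})}$. This is precisely Case 2 of Lemma \ref{Lemma7-n=2-converse}, where the linear independence forced by $w_1w_2\in St$ was used to establish the splitting $K=\ker p(w_1)\oplus\ker p(w_2^{-1})$; I would reuse that splitting. Infinite projective dimension of $K$ is then inherited by one of the two summands. If it is $\ker p(w_1)$, the previous paragraph produces a cyclic $x$ minimal for $w_1$ with $x\cdot\omega\in GSt$, whence $\omega\in\overline{GSt}_{cp}$. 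If instead it is $\ker p(w_2^{-1})$, the dual argument yields a cyclic $y$ minimal for $w_2^{-1}$ with $yw_2^{-1}=0$; since here $\mu(\omega)=\mu_\omega(n)=0$, the word $\omega\cdot y^{-1}$ meets the generalized-string conditions at the junction $w_2\cdot y^{-1}$, so $\omega\cdot y^{-1}\in GSt$ and $\omega\in\overline{GSt}^{cp}$.

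The main obstacle is this second case. The first delicate point is justifying that $K$ splits as the \emph{direct sum} $\ker p(w_1)\oplus\ker p(w_2^{-1})$ rather than an intersection: this is exactly the basis computation of Lemma \ref{Lemma7-n=2-converse}, Case 2, resting on $w_1w_2\in St$ and the minimality of $a^*$ and $d^*$ to forbid cancellation between the two columns of $\partial_\omega^0$. The second delicate point is bookkeeping: one must decide, according to which summand carries the infinite resolution, whether the witnessing cyclic path is prepended (giving $\overline{GSt}_{cp}$) or appended as an inverse (giving $\overline{GSt}^{cp}$), and in each case verify that the enlarged word is a legitimate element of $GSt$. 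The normalization $w_1\in\textbf{Pa}_{>0}$ made at the outset is what guarantees $\mu(\omega)=0$, which both target membership conditions require.
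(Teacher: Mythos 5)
Your proof follows the paper's own argument almost step for step: the same reduction of the case $w_2\in\textbf{Pa}_{>0}$ to the cycle-extraction argument of Lemma \ref{Lemma7-n=1-direct}, and the same use of the splitting $\ker\partial_{\omega}^0=\ker p(w_1)\oplus\ker p(w_2^{-1})$ imported from Lemma \ref{Lemma7-n=2-converse} in the case $w_2^{-1}\in\textbf{Pa}_{>0}$. The one genuine deviation is the final bookkeeping in the latter case when $\ker p(w_2^{-1})$ carries the infinite resolution: you append the cyclic path $y^{-1}$ and conclude $\omega\in\overline{GSt}^{cp}$ directly, using $\mu(\omega)=\mu_\omega(2)=0$, whereas the paper passes to $\omega^{-1}$, shows $\omega^{-1}\in\overline{GSt}_{cp}$, and then invokes $\omega\cong_s\omega^{-1}$. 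Your variant is legitimate (the statement is a disjunction) and arguably cleaner, since it avoids identifying $\omega$ with its inverse inside the fixed set of representatives $GSt$.

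However, your opening normalization contains a false claim, and it conceals a case that your proof (and, to be fair, the paper's) never treats. The convention $\mu_\omega(0)\leq\mu_\omega(n)$ does \emph{not} force $w_1\in\textbf{Pa}_{>0}$: the ``valley'' shape $w_1^{-1},w_2\in\textbf{Pa}_{>0}$, which the third bullet in the definition of $\overline{GSt}$ explicitly allows (it only requires $w_1w_2\in St$), has $\mu_\omega=(0,-1,0)$, so it satisfies your convention, and its inverse $w_2^{-1}\cdot w_1^{-1}$ has exactly the same shape, so no re-orientation makes the first letter direct. For such $\omega$ the lowest term of $P_\omega^\bullet$ sits in degree $-1$ and the relevant kernel is the intersection $\ker p(w_1^{-1})\cap\ker p(w_2)$, which is none of the kernels you analyze; worse, since $\mu(\omega)=-1$, neither $\omega\in\overline{GSt}_{cp}$ (which requires $\mu(\omega)=0$) nor $\omega\in\overline{GSt}^{cp}$ (which requires $\mu(\omega)=\mu_\omega(n)$) can hold as the definitions are written. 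The case is not vacuous: take arrows $y,z$ out of a vertex $v$, arrows $a,b$ into $v$, a loop $x$ at $s(a)$, and $I=\langle x^2,xa,ay,az,by\rangle$; then $\omega=y^{-1}\cdot z$ is a generalized string, the bottom kernel is $Aa$, whose minimal resolution is unbounded because of $x$, yet $\omega$ lies in neither special set. The paper buries this same case under ``by dual arguments,'' which do not apply because the valley shape is self-dual, so this is a defect you inherited rather than introduced; but the sentence asserting that the convention forces $w_1\in\textbf{Pa}_{>0}$ is where your write-up goes wrong, and a complete proof must either handle the valley separately or amend the definitions of $\overline{GSt}_{cp}$ and $\overline{GSt}^{cp}$ (for instance, anchoring the cyclic-path condition at indices where $\mu_\omega$ attains its minimum rather than at height $0$).
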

\begin{proof}
	Let $\omega=w_1\cdot w_2$ be a generalized string. Suppose that $w_1\in\textbf{Pa}_{>0}$. We will show that $\omega\in\overline{GSt}_{cp}$. If $w_{1}^{-1}\in\textbf{Pa}_{>0}$, by dual arguments it can be shown that $\omega\in\overline{GSt}^{cp}$. Now, we have two cases to consider: $w_2\in\textbf{Pa}_{>0}$ or $w_{2}^{-1}\in\textbf{Pa}_{>0}$.\\
	
	\textit{Case 1}: If $w_2\in\textbf{Pa}_{>0}$, $\mu_{\omega}(0)=0$, $\mu_{\omega}(1)=1$, $\mu_{\omega}(2)=2$ and $w_1w_2\in I$. It follows that $\mu(\omega)=0$ and the complex $P_{\omega}^\bullet$ is 
	
	$$P_{\omega}^\bullet: \ \xymatrix{\cdots\ar[r]&0\ar[r]&P_{s(w_1)}\ar[r]^(0.5){p(w_1)}&P_{t(w_1)}\ar[r]^{p(w_2)}&P_{t(w_2)}\ar[r]&0\ar[r]&\cdots}$$
	
	We know that $\ker p(w_1)=Aa^*\oplus Ab$ for some arrows $a, b$ such that $t(a)=t(b)=s(w_1)$. The hypothesis $P_{\beta(P_{\omega}^\bullet)^\bullet}^\bullet\notin K^b(\text{pro} \ A)$ implies that this kernel has an infinite minimal projective resolution. Thus, we are in the same position as in Lemma \ref{Lemma7-n=1-direct}. Therefore, using the same arguments we have that there exist $w\in \textbf{Pa}_c$ such that $w\cdot \omega\in GSt$, where $w$ is minimal for $w_1$. Hence $\omega\in\overline{GSt}_{cp}$. \\
	
	\textit{Case 2}: If $w_{2}^{-1}\in\textbf{Pa}_{>0}$, $\mu_{\omega}(0)=0$, $\mu_{\omega}(1)=1$, $\mu_{\omega}(2)=0$ and $w_1w_2\in St$. Thus, $\mu(\omega)=0$ and the complex $P_{\omega}^\bullet$ is 
	
	$$P_{\omega}^\bullet: \ \xymatrix{\cdots\ar[r]&0\ar[r]&P_{s(w_1)}\oplus P_{t(w_2)}\ar[r]^(0.6){\partial_{\omega}^0}&P_{t(w_1)}\ar[r]&0\ar[r]&\cdots}$$ where $\partial_{\omega}^0=\begin{pmatrix}
	p(w_1) \\ p(w_2^{-1})
	\end{pmatrix}$.
	Since the hypothesis implies that $\ker\partial_{\omega}^0\neq 0$ and $A$ is a string algebra, the general form of this kernel is $\ker\partial_{\omega}^0=\ker p(w_1)\oplus \ker p(w_{2}^{-1})=Aa^*\oplus Ab\oplus Ad^*\oplus Ac$ for some arrows $a, b, c, d$ such that $t(a)=t(b)=s(w_1)$, $t(c)=t(d)=s(w_{2}^{-1})=t(w_2)$ (see Lemma \ref{Lemma7-n=2-converse}), and at least one direct summand is different from zero.
	$$
	\xymatrix@R=4.5mm{ \ar[rd]^{b}               &                                &&&\\
		&\ar@{~>}[rrr]^{w_1} &&&\ar@{~>}[ddlll]^{w_{2}} \\
		\ar[ru]_a\ar[rd]^{d} &                                &&&\\
		&                                &&&\\
		\ar[ru]_c                &                                &&&}
	$$
	
	Since $P_{\beta(P_{\omega}^\bullet)^\bullet}^\bullet\notin K^b(\text{pro} \ A)$, then either $\ker p(w_1)$ or $\ker p(w_{2}^{-1})$ has an infinite minimal projective resolution.\\
	
	If $\ker p(w_1)=Aa^*\oplus Ab$ has infinite minimal projective resolution, then the same is true for either $Aa^*$ or $Ab$ and we proceed as in Lemma \ref{Lemma7-n=1-direct} to show that there exists $w\in\textbf{Pa}_c$ such that $w\cdot \omega\in GSt$ with $w$ minimal for $w_1$. Therefore $w\in \overline{GSt}_{cp}$.\\
	
	If $\ker p(w_{2}^{-1})=Ad^*\oplus Ac$ has infinite minimal projective resolution, the same holds for either $Ad^*$ or $Ac$. In this case we consider $\omega^{-1}=w_{2}^{-1}\cdot w_{1}^{-1}$ instead of $\omega$ and use the same arguments as in Lemma \ref{Lemma7-n=1-direct} to show that there is a path $w\in\textbf{Pa}_c$ such that $ww_{2}^{-1}=0$ (that is, $w\cdot \omega^{-1}\in GSt$), where $w$ is minimal for $w_{2}^{-1}$. This means that $\omega^{-1}\in\overline{GSt}_{cp}$, that is $\omega\in\overline{GSt}_{cp}$. (Here, we are using the fact that $\omega\cong_s \omega^{-1}$ and $P_{\omega}^{\bullet}\cong P_{\omega^{-1}}^{\bullet}$ in $D^b(A)$).
\end{proof}

Now, we use Lemmas \ref{Lemma7-n=1-direct} and \ref{Lemma7-n=2-direct} to prove the same result for the general case. 

\begin{lemma}\label{Lemma7-n-direct}
	Let $\omega=w_1\cdot w_2\cdots w_n$ be a generalized string. If $P_{\beta(P_{\omega}^\bullet)^\bullet}^\bullet\notin K^b(\text{pro} \ A)$, then either $\omega\in\overline{GSt}_{cp}$ or $\omega\in\overline{GSt}^{cp}$.
\end{lemma}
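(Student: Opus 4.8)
The plan is to mirror the converse Lemma~\ref{lemma7-n-converse}: the whole obstruction to boundedness lives at the bottom of the complex, so I would locate it there and read off a cyclic path. Put $t=\mu(\omega)$, the lowest degree in which $P_\omega^\bullet$ is nonzero. Since $\beta(P_\omega^\bullet)^\bullet$ is obtained by replacing $P_\omega^t$ with $\ker\partial_\omega^t$ in degree $t-1$ and then resolving, the hypothesis $P_{\beta(P_\omega^\bullet)^\bullet}^\bullet\notin K^b(\text{pro}\ A)$ is equivalent to saying that $\ker\partial_\omega^t$ has infinite minimal projective resolution. The first step is to record this reduction and to describe $\partial_\omega^t$ explicitly: its nonzero columns are indexed by the positions $j$ with $\mu_\omega(j)=t$, that is by the global minima of $\mu_\omega$, and column $j$ carries at most the two entries $p(w_j^{-1})$ into row $j-1$ and $p(w_{j+1})$ into row $j+1$.

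Next I would compute $\ker\partial_\omega^t$ by the linear independence argument already used in Lemma~\ref{Lemma7-n=2-converse} (Case~2) and Lemma~\ref{lemma7-n-converse} (Case~2). Writing an element as $(u_0,\dots,u_n)$, the defining conditions are one equation per degree-$(t+1)$ summand; at a minimum sitting at position $j$ these force $u_j\in\ker p(w_1)$ when $j=0$, $u_j\in\ker p(w_n^{-1})$ when $j=n$, and $u_j\in\ker p(w_j^{-1})\cap\ker p(w_{j+1})$ when $0<j<n$. The string conditions (so that $w_iw_{i+1}\in St$ whenever the two factors are of opposite type) guarantee that the monomials produced by neighbouring equations are linearly independent, hence the contributions of distinct minima cannot cancel and each of the listed kernels is a genuine direct summand. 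Thus $\ker\partial_\omega^t$ is the direct sum of these local kernels, and its resolution is infinite if and only if the resolution of at least one of them is.

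I would then fix a minimum $j^\ast$ whose local kernel has infinite minimal projective resolution. By Lemma~\ref{Lem-St} this local kernel has the form $Aa^\ast\oplus Ab$ (in the interior case intersected with one further $\ker p(\cdot)$), so one of its generators, say $a^\ast$, already has unbounded resolution. At this point the chain argument of Lemma~\ref{Lemma7-n=1-direct} applies verbatim: starting from $a^\ast$ and repeatedly passing to a minimal generator of the next kernel keeps the resolution unbounded, and since $\textbf{Pa}_{>0}$ is finite the chain $a^\ast,a_m^\ast,a_{m-1}^\ast,\dots$ must eventually close up, producing a path $w\in\textbf{Pa}_c$ minimal for the path it annihilates and attached at position $j^\ast$.

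Finally I would translate the location of $j^\ast$ into membership in one of the two sets. If $j^\ast=0$ then $t=0$, $w$ is minimal for $w_1$ and $w\cdot\omega\in GSt$, giving $\omega\in\overline{GSt}_{cp}$; if $j^\ast$ is an interior minimum at level $0$ then $w\in\ker p(w_{j^\ast}^{-1})\cap\ker p(w_{j^\ast+1})$ with $j^\ast$ even, again giving $\omega\in\overline{GSt}_{cp}$ through its second clause; and if the relevant minimum sits at the right end one passes to $\omega^{-1}$ (legitimate since $\omega\cong_s\omega^{-1}$, $P_\omega^\bullet\cong P_{\omega^{-1}}^\bullet$ in $D^b(A)$, and $\mu_\omega(0)\le\mu_\omega(n)$ by the standing convention) to obtain $\omega\in\overline{GSt}^{cp}$. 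The step I expect to be the real obstacle is the kernel computation of the second paragraph: when two minima lie at distance two they share the intermediate degree-$(t+1)$ summand, so $\ker\partial_\omega^t$ does not split off naively and one must run the string linear independence bookkeeping carefully to be sure that each local kernel --- and in particular the summand $Aw$ carrying the cyclic path --- is a true direct summand whose infinite resolution survives into the resolution of the entire complex; matching the resulting configuration to the precise clauses of $\overline{GSt}_{cp}$ and $\overline{GSt}^{cp}$ (in particular reducing a low-lying interior minimum to one of those normal forms) also requires some care.
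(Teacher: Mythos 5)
Your proposal follows essentially the same route as the paper's own proof: reduce the hypothesis to the statement that the kernel of the lowest differential has unbounded minimal resolution, split that kernel (via the string linear-independence argument of Lemmas \ref{Lemma7-n=2-converse} and \ref{lemma7-n-converse}) into local kernels attached to the minima of $\mu_\omega$, run the chain argument of Lemma \ref{Lemma7-n=1-direct} on the responsible summand to produce a cyclic path in $\textbf{Pa}_c$, and then translate the position of that minimum (left end, interior, right end) into membership in $\overline{GSt}_{cp}$ or $\overline{GSt}^{cp}$, passing to $\omega^{-1}$ for the right-end case. Even the loose end you flag at the close --- an interior global minimum sitting at a negative level, where neither clause of the two sets applies as written --- is precisely the point the paper itself elides by declaring $\mu(\omega)=0$ ``without loss of generality,'' so your attempt matches the paper's proof in both substance and level of rigor.
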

\begin{proof}
	As in the previous lemma, we will show that if $w_1\in\textbf{Pa}_{>0}$, then $\omega\in\overline{GSt}_{cp}$. By dual arguments it can be shown that if $w_{1}^{-1}\in\textbf{Pa}_{>0}$, then $\omega\in\overline{GSt}^{cp}$.
		
	Let us begin by assuming that $\mu(\omega)=0$, that is, we suppose that $\omega$ has as number of direct paths greater than or equal to the number of inverse paths.  We are allowed to do so since, if $\omega$ has more inverse than direct paths, we consider $\omega^{-1}$ instead of $\omega$.
 	If $w_1\in \textbf{Pa}_{>0}$, as $\mu(\omega)=0$, then $\mu_{\omega}(1)=1$ and the possibilities for some of the next values of $\mu_{\omega}$ are 
 	
 	$$\mu_{\omega}(2)=\left\{\begin{array}{l}
 	0 \\
 	2
 	\end{array}\right. ,\quad \mu_{\omega}(3)=\left\{\begin{array}{l} 1\\3\end{array}\right. ,\quad \mu_{\omega}(4)=\left\{\begin{array}{l} 0\\2\\4 \end{array}\right., \quad \mu_{\omega}(5)=\left\{\begin{array}{l} 1\\3\\5 \end{array}\right.
 	$$
 	Thus, if $\mu_{\omega}(l)=0$ with $l>0$, then $l$ is an even number greater than or equal to two and less that $n$. Also, it is possible that $\mu_{\omega}(n)=0$ when $n$ is even. It follows that the complex $P_{\omega}^\bullet$ is 
 	$$P_{\omega}^\bullet: \ \xymatrix{\cdots\ar[r]&0\ar[r]&P_{\omega}^0\ar[r]^{\partial_{\omega}^0}&P_{\omega}^1\ar[r]^{\partial_{\omega}^1}&P_{\omega}^2\ar[r]&\cdots}$$ 
 	where the general form of $\ker\partial_{\omega}^0$ is either one of the following
 	\begin{enumerate}
 		\item\label{ker:1} $\ker\partial_{\omega}^0\cong \ker p(w_1)\oplus\cdots\oplus \ker p(w_{l}^{-1})\cap \ker p(w_{l+1})\oplus\cdots$, whenever $n$ is odd and $\mu_{\omega}(l)=0$ for some $2\leq l\leq n-1$.
 		\item\label{ker:2} $\ker\partial_{\omega}^0\cong \ker p(w_1)\oplus\cdots\oplus \ker p(w_{l}^{-1})\cap \ker p(w_{l+1})\oplus\cdots\oplus \ker p(w_{n}^{-1})$, whenever $n$ is even, there is an even index $2\leq l\leq n-2$ with $\mu_{\omega}(l)=0$ and $\mu_{\omega}(n)=0$.
 		\item\label{ker:3} $\ker\partial_{\omega}^0\cong \ker p(w_1)\oplus\cdots\oplus \ker p(w_{l}^{-1})\cap \ker p(w_{l+1})\oplus\cdots$, whenever $n$ is even, there is an even index $2\leq l\leq n-2$ with $\mu_{\omega}(l)=0$ and $\mu_{\omega}(n)>0$.
 	
 	\end{enumerate}
 Now, since $P_{\beta(P_{\omega}^\bullet)^\bullet}^\bullet\notin K^b(\text{pro} \ A)$, then $\ker\partial_{\omega}^0$ has an infinite minimal projective resolution. If $\ker\partial_{\omega}^0$ is as in (\ref{ker:1}), then at least one direct summand also has an infinite minimal projective resolution. If it is the case for $\ker p(w_1)$, we proceed as in Lemma \ref{Lemma7-n=1-direct}, to show that there is a path $w\in\textbf{Pa}_c$ such that $w\cdot \omega\in GSt$, where $w$ is minimal for $w_1$, and hence $\omega\in \overline{GSt}_{cp}$. 
 
 If the minimal projective resolution of $\ker p(w_{l}^{-1})\cap\ker p(w_{l+1})$ is not bounded for some $2\leq l\leq n-1$, the same arguments allow us to show that there exist $w\in\textbf{Pa}_c$ such that $w\in\ker p(w_{l}^{-1})\cap\ker p(w_{l+1})$, where $w$ is minimal for both $w_{l}^{-1}$ and $w_{l+1}$. Therefore, the conclusion follows.
 
 If $\ker\partial_{\omega}^0$ is as in (\ref{ker:3}), the reasoning is the same. Finally, if $\ker\partial_{\omega}^0$ is as in (\ref{ker:2}) and $\ker p(w_{n}^{-1})$ has infinite minimal projective resolution, we consider ${\omega}^{-1}$ instead of $\omega$ and, as in the proof of Lemma \ref{Lemma7-n=2-direct}, case 2, it can be shown that $\omega\cong_s {\omega}^{-1}\in\overline{GSt}_{cp}$.
\end{proof}

Now, if we put Lemmas \ref{Lemma7-n=1-converse} to \ref{Lemma7-n-direct} together, we get the following theorem, which provides a characterization for a string complex to be periodic in the case of a string algebra.

\begin{theorem}\label{Lemma7-String}
	Let $A=k(Q, I)$ be a string algebra. If $\omega=w_1\cdot w_2\cdots w_n$ is a generalized string, then $P_{\beta(P_{\omega}^\bullet)^\bullet}^\bullet\notin K^b(\text{pro} \ A)$ if and only if either $\omega\in\overline{GSt}_{cp}$ or $\omega\in\overline{GSt}^{cp}$.
\end{theorem}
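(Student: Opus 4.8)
The plan is to obtain the biconditional by establishing its two implications separately, each of which has effectively been reduced to one of the general-length lemmas proved above. Before splitting, I would normalize by assuming $\mu(\omega)=0$; this is harmless because if $\omega$ has more inverse than direct paths we may replace it by $\omega^{-1}$, using that $\omega\cong_s\omega^{-1}$ and $P_{\omega}^\bullet\cong P_{\omega^{-1}}^\bullet$ in $D^b(A)$. Under this normalization $w_1\in\textbf{Pa}_{>0}$, so the analysis of $P_{\beta(P_{\omega}^\bullet)^\bullet}^\bullet$ can start at the leftmost nonzero term $P_{\omega}^0$.

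For the sufficiency direction ($\Leftarrow$), I would directly invoke Lemma \ref{lemma7-n-converse}: if $\omega\in\overline{GSt}_{cp}$ or $\omega\in\overline{GSt}^{cp}$, then $P_{\beta(P_{\omega}^\bullet)^\bullet}^\bullet\notin K^b(\text{pro}\ A)$. The mechanism driving that lemma is that membership in $\overline{GSt}_{cp}$ supplies a cyclic path $w\in\textbf{Pa}_c$ with $ww_1=0$ (or a $w$ lying in $\ker p(w_{l}^{-1})\cap\ker p(w_{l+1})$), so by Lemma \ref{Lem-St} the summand $Aw$ appears inside $\ker\partial_{\omega}^0$; the defining cyclic property of $\textbf{Pa}_c$ then feeds a never-terminating chain of minimal generators $w, u_m, u_{m-1},\dots$ into the minimal projective resolution, while finiteness of $Q_1$ forces the extraneous summands to die out, leaving the resolution unbounded. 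The symmetric closed-cycle argument on the right end treats $\overline{GSt}^{cp}$.

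For the necessity direction ($\Rightarrow$), I would invoke Lemma \ref{Lemma7-n-direct}: if $P_{\beta(P_{\omega}^\bullet)^\bullet}^\bullet\notin K^b(\text{pro}\ A)$, then $\ker\partial_{\omega}^0$ has infinite minimal projective resolution, so at least one of its summands $\ker p(w_1)$ or $\ker p(w_{l}^{-1})\cap\ker p(w_{l+1})$ inherits this. Tracing backward through minimal generators --- which by Lemma \ref{Lem-St} and the uniqueness of left completions in Lemma \ref{lem-left-completion} are genuine paths $a^*$ rather than merely arrows --- and using finiteness of $\textbf{Pa}_{>0}$ to force a repetition, produces a cyclic path $w\in\textbf{Pa}_c$ that witnesses membership in $\overline{GSt}_{cp}$ (or, after passing to $\omega^{-1}$, in $\overline{GSt}^{cp}$). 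Concatenating the two implications proves the theorem; the base cases $n=1$ and $n=2$ needed inside the two general lemmas are exactly Lemmas \ref{Lemma7-n=1-converse}, \ref{Lemma7-n=2-converse} and \ref{Lemma7-n=1-direct}, \ref{Lemma7-n=2-direct}.

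The step I expect to be the genuine obstacle is not the final assembly but the structural bookkeeping carried out inside Lemmas \ref{lemma7-n-converse} and \ref{Lemma7-n-direct}: identifying the correct general shape of $\ker\partial_{\omega}^0$ across the parity and endpoint cases for $\mu_{\omega}(n)$, and then verifying that a single cyclic minimal generator truly persists through the entire resolution while every other summand is eventually annihilated. At the level of the theorem, the only remaining point requiring care is checking that $\overline{GSt}_{cp}$ and $\overline{GSt}^{cp}$ together capture precisely the obstructions detected by the two lemmas, so that no generalized string with unbounded resolution escapes the dichotomy.
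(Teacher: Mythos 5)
Your proposal is correct and takes essentially the same route as the paper: the theorem is proved there simply by assembling Lemma \ref{lemma7-n-converse} (for sufficiency) and Lemma \ref{Lemma7-n-direct} (for necessity), with Lemmas \ref{Lemma7-n=1-converse}, \ref{Lemma7-n=2-converse}, \ref{Lemma7-n=1-direct} and \ref{Lemma7-n=2-direct} as the base cases, exactly as you describe. The only cosmetic difference is that you perform the normalization $\mu(\omega)=0$ (replacing $\omega$ by $\omega^{-1}$) once at the theorem level, whereas the paper carries out this reduction inside the individual lemmas; this changes nothing of substance.
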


\section{Applications}\label{Sec:App}
Two important applications of the Theorem \ref{Lemma7-String}, which we detail below, are related to the global dimension and the indecomposables of the derived category of a string algebra.

\subsection{About of the global dimension of a string algebra}
 Let $A$ be a finite dimensional $k$-algebra. Recall that given an $A$-module $M$, the \textit{projective dimension} of $M$, denoted by $pd \ M$ is the smallest integer $d$ such that there exists a projective resolution of the form
$$\xymatrix{0\ar[r]&P_d\ar[r]&P_{d-1}\ar[r]&\cdots\ar[r]&P_1\ar[r]&P_0\ar[r]&M}.$$
If no resolution exists, then we say that $M$ has infinite projective dimension. The \textit{global dimension of $A$}, denoted by $\text{gl.dim} A$ is defined as the supremum of the projective dimensions of all $A$-modules, that is, $$\text{gl.dim} A:=\text{sup}\{pd \ M \mid M\in A-\text{mod}\}.$$

Theorem \ref{Lemma7-String} is very important because it deals with the structure of a string algebra. That is, it establishes when a string algebra has infinite global dimension. We state this in the following result.

\begin{theorem}\label{Th:gl.dim-St}
	Let $A=k(Q, I)$ be a string algebra. If $\overline{GSt}_{cp}\neq\emptyset$ or $\overline{GSt}^{cp}\neq\emptyset$, then $\text{gl.dim} A=\infty$.
\end{theorem}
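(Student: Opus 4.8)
The plan is to deduce the result from Theorem \ref{Lemma7-String} by isolating a single module of infinite projective dimension. First I would use the hypothesis to pick a generalized string $\omega=w_1\cdot w_2\cdots w_n$ with $\omega\in\overline{GSt}_{cp}$ or $\omega\in\overline{GSt}^{cp}$. Because passing to the inverse $\omega^{-1}$ interchanges these two situations (the device used repeatedly in Lemmas \ref{Lemma7-n=1-converse}--\ref{lemma7-n-converse}), it is enough to treat the case $\omega\in\overline{GSt}_{cp}$. Theorem \ref{Lemma7-String} then gives at once $P_{\beta(P_{\omega}^\bullet)^\bullet}^\bullet\notin K^b(\text{pro} \ A)$; that is, the minimal projective resolution attached to the string complex $P_{\omega}^\bullet$ is infinite.

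The main step is to translate this unbounded resolution into infinite projective dimension of a genuine module in $A-\text{mod}$. Let $t$ be the least degree in which $P_\omega^\bullet$ is nonzero (for $\omega\in\overline{GSt}_{cp}$ one has $\mu(\omega)=0$, so $t=0$) and put $K:=\ker\partial_{\omega}^t$. By the description of the good truncation recalled after Proposition \ref{prop:1}, the complex $\beta(P_\omega^\bullet)^\bullet$ agrees with $P_\omega^\bullet$ in all degrees $\geq t$, where every term is projective, and carries $K$ in degree $t-1$ with the inclusion as differential. Hence a projective resolution of the complex $\beta(P_\omega^\bullet)^\bullet$ is obtained simply by resolving the unique non-projective term $K$: one splices a minimal projective resolution $\cdots\to Q_1\to Q_0\twoheadrightarrow K$ onto the already projective tail $P^t\to P^{t+1}\to\cdots$. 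Consequently $P_{\beta(P_\omega^\bullet)^\bullet}^\bullet$ is bounded if and only if $pd \ K<\infty$.

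Putting the two steps together, $P_{\beta(P_\omega^\bullet)^\bullet}^\bullet\notin K^b(\text{pro} \ A)$ forces $pd \ K=\infty$. Since $K$ is a submodule of a finitely generated projective module over the finite-dimensional, hence Noetherian, algebra $A$, it is finitely generated, so $K\in A-\text{mod}$; therefore $\text{gl.dim} A=\sup\{pd \ M\mid M\in A-\text{mod}\}=\infty$, as claimed.

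The step demanding the most care is the identification of $P_{\beta(P_\omega^\bullet)^\bullet}^\bullet$ with the spliced resolution, i.e. the claim that the projective terms already present in $\beta(P_\omega^\bullet)^\bullet$ do not contribute to the tail of its minimal resolution and that boundedness is equivalent to $pd \ K<\infty$. This is routine homological algebra (minimality lets one cancel projective direct summands, and uniqueness of minimal resolutions makes ``infinite resolution'' well defined), but it should be phrased carefully so that the passage from ``infinite minimal projective resolution of the complex'' to ``infinite projective dimension of the module $K$'' is unambiguous. As a concrete check, one may track, exactly as in the proofs of Lemmas \ref{Lemma7-n=1-converse}--\ref{lemma7-n-converse}, the explicit direct summand $Aw$ with $w\in\textbf{Pa}_c$ sitting inside $K$; the cyclic path $w$ is precisely what produces the periodicity and exhibits the infinite projective dimension.
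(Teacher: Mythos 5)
Your proposal is correct, but it takes a genuinely different route from the paper. The paper never invokes Theorem \ref{Lemma7-String}: it argues directly from the definition of $\overline{GSt}_{cp}$, taking the cyclic path $w\in\textbf{Pa}_c$ attached to $\omega$, noting via Lemma \ref{Lem-St} that $\ker p(w_1)=Aw\oplus Ab$, and then re-running the argument of Lemma \ref{lemma7-n-converse} to see that the explicit module $Aw$ has infinite minimal projective resolution; that module is the paper's witness for $\text{gl.dim} A=\infty$. You instead use Theorem \ref{Lemma7-String} as a black box and convert the complex-level conclusion $P_{\beta(P_{\omega}^\bullet)^\bullet}^\bullet\notin K^b(\text{pro}\ A)$ into the module-level conclusion $pd \ K=\infty$ for $K=\ker\partial_{\omega}^t$, via the splicing description of the resolution of the good truncation. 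That translation is sound, and in fact you only need its easy direction: if $pd \ K<\infty$, splicing a finite resolution of $K$ onto the projective tail $P_{\omega}^t\to P_{\omega}^{t+1}\to\cdots$ gives a bounded projective resolution of $\beta(P_{\omega}^\bullet)^\bullet$, contradicting the hypothesis; the harder ``bounded implies finite projective dimension'' direction, which requires minimality and cancellation of split summands at the junction, is never used. What your route buys is modularity: it shows, with no string-specific combinatorics, that the existence of any complex with unbounded minimal projective resolution forces infinite global dimension (essentially the contrapositive of Remark \ref{Rem:remark1}), and it treats $\overline{GSt}_{cp}$ and $\overline{GSt}^{cp}$ uniformly. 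What the paper's route buys is explicitness: it names the concrete periodic module $Aw$, generated by a cyclic path, which is the combinatorial source of the phenomenon and which your final paragraph only gestures at. One small inaccuracy, harmless here: inversion does not literally interchange $\overline{GSt}_{cp}$ and $\overline{GSt}^{cp}$ (the interior condition $w\in\ker p(w_{l}^{-1})\cap\ker p(w_{l+1})$ has no counterpart in the definition of $\overline{GSt}^{cp}$), but your argument never actually needs this reduction, since Theorem \ref{Lemma7-String} and the truncation argument apply verbatim in either case and the value of $t$ plays no role.
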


\begin{proof}
	Suppose that $\overline{GSt}_{cp}\neq\emptyset$. The case $\overline{GSt}^{cp}\neq\emptyset$ is similar.
If $\omega=w_1\cdot w_2\cdots w_n\in \overline{GSt}_{cp}$, then $\mu(\omega)=0$ and there exists $w\in \textbf{Pa}_c$ such that $w\cdot \omega=w\cdot w_1\cdot w_2\cdots w_n\in GSt$ or  $w\in\ker p(w_{l}^{-1})\cap\ker p(w_{l+1})$ for some even index $l> 0$ with $\mu_{\omega}(l)=0$.
	
	Again, we will consider the case where $w\cdot \omega\in GSt$. The case in which $w\in\ker p(w_{l}^{-1})\cap\ker p(w_{l+1})$ is completely analogous. If $w\cdot \omega\in GSt$ and $\mu(\omega)=0$, necessarily $w_1\in\textbf{Pa}_{>0}$ and hence $ww_1=0$. According to Lemma \ref{Lem-St}, the general structure of $\ker p(w_1)$ is $\ker p(w_1)=Aw\oplus Ab$, where $b$ is an arrow such that $t(b)=s(w_1)$.
	Following the proof of Lemma \ref{lemma7-n-converse}, since $w\in\textbf{Pa}_c$, we get that $Aw$ has infinite minimal projective resolution and this implies that $\text{gl.dim} A=\infty$.
\end{proof}

\begin{example}\label{Example-Qc*} Let $Q$ be the bound quiver
	
	$$ \xymatrix{&1\ar[r]^a&2\\
		5\ar@(ul,dl)[]_{x} \ar[r]_d &3\ar[r]_c\ar[ur]^b&4}
	$$	
with $I=\langle db, dc, x^2, xd\rangle$. Then $A=kQ/I$ is a string algebra over $k$. In this case we have that $\emph{\textbf{Pa}}_c=\{x,d,c,b\}$.  
	
	Now, consider the generalized string $\omega=w_1\cdot w_2\cdot w_3=a\cdot b^{-1}\cdot c$. Then, $l(\omega)=3>0$, $\mu(\omega)=0$ and $\exists d\in \emph{\textbf{Pa}}_c$ such that $d\in \ker p(w_{2}^{-1})\cap\ker p(w_3)=\ker p(b)\cap \ker p(c)$, with $\mu_{\omega}(2)=0$. That is, $\omega\in \overline{GSt}_{cp}$.
	
	According to Definition \ref{def-st-comp}, the complex $P_{\omega}^\bullet$ is 
	$$P_{\omega}^\bullet: \ \xymatrix{\cdots\ar[r]&0\ar[r]&P_1\oplus P_3\ar[r]^{\partial_{\omega}^0}&P_2\oplus P_4\ar[r]&0\ar[r]&\cdots}$$ 
	
	where $\partial_w^0=\begin{pmatrix}
	p(a)  & 0\\
	p(b)  & p(c)
	\end{pmatrix}$        
	
	Now, $\begin{pmatrix}u & v\end{pmatrix}\in\ker \partial_{\omega}^0$ if and only if $ua+vb=0$ and $vc=0$. Since $u\in P_1$ and $v\in P_3$, then $u=\alpha_0e_1$ and $v=\beta_0e_3+\beta_1d$ for some scalars $\alpha_0, \beta_0, \beta_1\in k$.
	
	From $ua+vb=0$, it follows that $\alpha_0a+\beta_0b=0$, and hence $\alpha_0=\beta_0=0$. Notice that the condition $vc=0$ is already satisfied. Therefore, we have that $u=0$ and $v=\beta_1d\in Ad=\ker p(b)\cap\ker p(c)$ and thus $$\ker \partial_{\omega}^0\cong \ker p(b)\cap\ker p(c)=Ad$$ (in general, we have for this case that $\ker\partial_{\omega}^0=\ker p(w_1)\oplus\ker p(w_2^{-1})\cap\ker p(w_3)$).	
	
	The projective cover of this kernel is $p(d): P_5\longrightarrow Ad$ and $\ker p(d)=Ax$. Since $x^2=0$, it is clear that the minimal projective resolution $P_{\beta(P_{\omega}^\bullet)^\bullet}^\bullet$ of $P_{\omega}^\bullet$ is
	
	$$\xymatrix@C=4mm@R=4.7mm{\cdots\ar[rr]&&P_5\ar[rr]^{p(x)}\ar@{->>}[dr]&                              &P_5\ar[rr]^{p(d)}\ar@{->>}[dr]&                           &P_{\omega}^0\ar[rr]^{\partial_{\omega}^0}&&P_{\omega}^1\ar[rr]&&0\ar[r]&\cdots\\
		&&                                                             &Ax\ar@{^(->}[ru]&                                                    & Ad\ar@{^(->}[ru]&                                       &&                       && }
	$$
	that is, $P_{\beta(P_{\omega}^\bullet)^\bullet}^\bullet\notin K^b(\text{pro} \ A)$.       
	
	Notice also that, since the generalized string $\omega=w_1\cdot w_2\cdot w_3=a\cdot b^{-1}\cdot c \in\overline{GSt}_{cp}$, then, according to Theorem \ref{Th:gl.dim-St}, we have that $\text{gl.dim} A=\infty$.         
\end{example}

\subsection{About the indecomposables in $D^b(A)$}
 For string algebras with the property that every arrow belongs to a unique maximal path, as in example \ref{Ex:Str:1}(2), the functor $F: \mathfrak{p}(A)\longrightarrow s(\mathcal{Y}, k)$ in \cite{FGR} assigns to every  generalized strings $\omega$ an indecomposable object in $D^b(A)$. In fact, this is a consequence of Theorem \ref{Lemma7-String} and Proposition \ref{prop:1}. Precisely,

\begin{theorem}
	Let $A=k(Q,I)$ be a string algebra with the property that every arrow belongs to a unique maximal path. Then
	\begin{eqnarray*}
		\text{ind}_0  D^b(A) &\supseteq&\{T^i(P_w^\bullet) \mid w\in GSt, i\in\mathbb{Z}\}\dot{\cup}\\
		&  &\{T^i(\beta(P_{\omega}^\bullet)^\bullet) \mid \omega\in \overline{GSt}_{cp}, i\in\mathbb{Z}\}\dot{\cup}\\
		&  & \{T^i(\beta(P_{\omega}^\bullet)^\bullet) \mid \omega\in \overline{GSt}^{cp}\setminus \overline{GSt}_{cp}, i\in\mathbb{Z}\}.
	\end{eqnarray*}
	where $T$ is the translation functor.
\end{theorem}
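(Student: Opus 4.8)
The plan is to read the three families directly off the description of $\text{ind}_0\,D^b(A)$ in Proposition~\ref{prop:1}, using Theorem~\ref{Lemma7-String} to detect an unbounded minimal projective resolution and the functor $F\colon\mathfrak{p}(A)\longrightarrow s(\mathcal{Y},k)$ of \cite{FGR} to secure indecomposability of the string complexes themselves. Since $T$ is an autoequivalence of $D^b(A)$ it preserves indecomposability, so it suffices to argue the case $i=0$ in each family and then apply $T^i$; accordingly I suppress the shift below.

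The key input is that $P_\omega^\bullet\in\text{ind}_0\,\mathfrak{p}(A)$ for every $\omega\in GSt$. The hypothesis that each arrow lies on a unique maximal path is precisely what lets the construction of \cite{FGR} be carried over from SAG algebras to the present setting: it rules out the one phenomenon absent there, namely distinct maximal paths sharing an arrow (contrast Example~\ref{Ex:Str:1}(\ref{Ex1:1}) with (\ref{Ex1:2})), so the assignment $\omega\mapsto F(P_\omega^\bullet)$ is well defined and $F$ stays faithful. Then $F(P_\omega^\bullet)$ is a string object over $\mathcal{Y}$, hence indecomposable; and since $F$ is additive and faithful, a nontrivial splitting $P_\omega^\bullet\cong X_1\oplus X_2$ with $X_1,X_2\neq 0$ would force $F(X_1),F(X_2)\neq 0$ and split $F(P_\omega^\bullet)$, a contradiction. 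Thus $P_\omega^\bullet$ is indecomposable in the Krull--Schmidt category $\mathfrak{p}(A)$. This is the step I expect to be the main obstacle, as it is the only point at which the passage from SAG algebras to string algebras with the unique maximal path property must be justified; everything after it is formal.

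The first family is then immediate from $\text{ind}_0\,\mathfrak{p}(A)\subseteq\text{ind}_0\,D^b(A)$ in Proposition~\ref{prop:1}: one has $P_w^\bullet\in\text{ind}_0\,D^b(A)$ for all $w\in GSt$, and the $T^i(P_w^\bullet)$ fill it out. For the second and third families, fix $\omega\in\overline{GSt}_{cp}\cup\overline{GSt}^{cp}$; by the preceding paragraph $P_\omega^\bullet\in\text{ind}_0\,\mathfrak{p}(A)$, and by Theorem~\ref{Lemma7-String} we have $P_{\beta(P_\omega^\bullet)^\bullet}^\bullet\notin K^b(\text{pro}\ A)$, so $P_\omega^\bullet\in\overline{\mathcal{X}(A)}$. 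Taking the representative $M^\bullet\in\mathcal{X}(A)$ of its $\cong_{\mathcal{X}}$-class, the definition of $\cong_{\mathcal{X}}$ gives $P_{\beta(P_\omega^\bullet)^\bullet}^\bullet\cong P_{\beta(M^\bullet)^\bullet}^\bullet$ in $K^{-,b}(\text{pro}\ A)$, whence $\beta(P_\omega^\bullet)^\bullet\cong\beta(M^\bullet)^\bullet$ in $D^b(A)$ under the equivalence $D^b(A)\cong K^{-,b}(\text{pro}\ A)$. As $\beta(M^\bullet)^\bullet\in\text{ind}_0\,D^b(A)$ by Proposition~\ref{prop:1}, the complex $\beta(P_\omega^\bullet)^\bullet$ is indecomposable, and the $T^i(\beta(P_\omega^\bullet)^\bullet)$ yield the second and third families.

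Finally, for the disjointness recorded by $\dot{\cup}$: every object of the first family lies in $K^b(\text{pro}\ A)$, a string complex being a bounded projective complex, while every object of the other two satisfies $P_{\beta(P_\omega^\bullet)^\bullet}^\bullet\notin K^b(\text{pro}\ A)$ by Theorem~\ref{Lemma7-String}; since the minimal projective representative in $K^{-,b}(\text{pro}\ A)$ is unique up to isomorphism and $T^i$ carries $K^b(\text{pro}\ A)$ into itself, no shift of a first-family object can equal one of the others. The second and third families have disjoint index sets by construction, and the finer point that distinct $\cong_{\mathcal{X}}$-classes produce distinct objects reduces, through the relation $\cong_{\mathcal{X}}$, to checking that the two periodicity conditions---$\mu(\omega)=0$ with the cyclic path attached at $w_1$, against $\mu(\omega)=\mu_\omega(n)$ with it attached at the terminal end---cannot yield the same minimal projective resolution up to shift.
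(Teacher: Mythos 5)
Your proposal is correct and is essentially the paper's own argument made explicit: the paper proves this theorem by citing the Main Theorem of \cite{FGR} (Theorem 37), observing that its essential Lemmas 24 and 35 remain valid under the unique maximal path property---exactly the delegation you make when you invoke the functor $F$ of \cite{FGR} and the indecomposability of string objects to get $P_\omega^\bullet\in\text{ind}_0\,\mathfrak{p}(A)$. Your remaining steps---Theorem \ref{Lemma7-String} to place the periodic complexes in $\overline{\mathcal{X}(A)}$ and Proposition \ref{prop:1} to read off the three families and their disjointness---are precisely the formal skeleton underlying the cited proof, so the two arguments coincide in substance, with yours merely more detailed.
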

\begin{proof}
The proof follows the same reasoning as the Main Theorem in \cite{FGR} (Theorem 37), because the important fact is the \textit{unique maximal path property}. In fact, Lemmas 24 and 35 in \cite{FGR} are still valid in this case, which are essential in the proof and are based on the uniqueness of maximal paths.
\end{proof}

\subsection*{Acknowledgments}
This paper was prepared for the proceedings of the meeting {\it Geometry in Algebra and Algebra in Geometry}, GAAG-V, held in Medell\'{i}n in 2019. The first author would like to thank the organizers for the invitation to speak and to Colciencias (Beca Doctorado Nacional Colciencias, Convocatoria 647 de 2014). The authors are also grateful to CODI (Universidad de Antioquia, U de A). The GAAG-V and the third author, were partially supported by CODI, University of Antioquia, project 2017-15756 Stable Limit Linear Series on Curves. 



\begin{thebibliography}{9}
%
%
\bibitem{Be-Me} V. Bekkert, H. A. Merklen, \emph{Indecomposables in derived categories of gentle algebras}, Algebras and Representation Theory \textbf{6} (2003), 285-302.
%
%
\bibitem{Be-Han-Ma} P. Bergh, Y. Han, D. Madsen, \emph{Hochschild homology and truncated cycles}, Proc. Amer. Math. Soc. \textbf{140} (2012), no 4, 1133-1139 (2012).
%
%
%
\bibitem{FGKK} C. Feustel, E. Green, E. Kirkman and J. Kuzmanovich, \emph{Constructing projective resolutions}, Communications in Algebra, 21:6, 1869-1887, (1993) 

\bibitem{FGR} A. Franco, H. Giraldo, and P. Rizzo, \emph{String and band complexes over SAG algebras.} arXiv preprint arXiv:1910.04012 (2019).
%
\bibitem{GaRo} P. Gabriel, A. Roiter, \emph{Representations of finite-dimensional algebras}, Algebra VIII, Encyclopaedia of Math. Sci. 73, Springer, New York, 1992.
%
\bibitem{GHZ} E. Green, D. Happel, and D. Zacharia, \emph{Projective resolutions over Artin algebras with zero relations},  Illinois Journal of Mathematics 29.1 (1985): 180-190.
\bibitem{GSZ} E. Green, Ø. Solberg, and D. Zacharia,  \emph{Minimal projective resolutions}, Transactions of the American Mathematical Society 353.7 (2001): 2915-2939.
%
\bibitem{Ha-Za} D. Happel, D. Zacharia, \emph{Algebras of finite global dimension}, In: Buan A., Reiten I., Solberg Ø. (eds) Algebras, Quivers and Representations. Abel Symposia, vol 8. Springer, Berlin, Heidelberg (2013).
%


\bibitem{KoZi} S. K\"{o}nig, A. Zimmermann, \emph{Derived equivalences for group rings}, Lecture notes in Math. 1685, Springer, New York (1998).
%
%
\bibitem{SW}
S. Skowronski, J. Waschbüsch, {\it Representation-finite biserial algebras}, J. Reine Angew. Math. 345 (1983), 172–181.
%
%
\end{thebibliography}
\end{document}